\newtheorem{theorem}{Theorem}[section]
\newtheorem{problem}[theorem]{Problem}
\newtheorem{condition}[theorem]{Condition}
\newtheorem{lemma}[theorem]{Lemma}
\newtheorem{prop}[theorem]{Proposition}
\newtheorem{remark}[theorem]{Remark}
\newtheorem{corollary}[theorem]{Corollary}
\newtheorem{defi}[theorem]{Definition}
\begin{document}
\title{A dissipative time reversal technique for photo-acoustic tomography
in a cavity}
\author{Linh V. Nguyen and Leonid Kunyansky}

\begin{abstract}
We consider the inverse source problem arising in thermo- and photo-
acoustic tomography. It consists in reconstructing the initial pressure from
the boundary measurements of the acoustic wave. Our goal is to extend
versatile time reversal techniques to the case of perfectly reflecting
boundary of the domain. Standard time reversal works only if the solution of
the direct problem decays in time, which does not happen in the setup we
consider. We thus propose a novel time reversal technique with a
non-standard boundary condition. The error induced by this time reversal
technique satisfies the wave equation with a dissipative boundary condition
and, therefore, decays in time. For larger measurement times, this method
yields a close approximation; for smaller times, the first approximation can
be iteratively refined, resulting in a convergent Neumann series for the
approximation.
\end{abstract}

\maketitle


\section{Introduction\label{S:intro}}

We consider the inverse source problem arising in the thermo- and
photoacoustic tomography (TAT and PAT)
\cite{KrugerPAT,KrugerTAT,Oraev94,XW06}.
It consists in reconstructing the
initial pressure in the acoustic wave from the values of time-dependent
pressure measured on a surface, completely or partially surrounding the
object of interest \cite{KKun}. During the last decade, significant results
were obtained in solving this problem under the assumption that the wave
propagates in free space (see, for example \cite{FPR,Kun-expl,Kun-ser,Ng,
US,US-Num,AK,FHR,Nat12,XW05,Pala,PS02,Ha09} and reviews \cite{KKun,KKun1,Sch}
for additional references). Applicability of the free space approximation
depends on the type of the device(s) used to conduct the measurements: it is
valid if reflection of waves from the detectors can be neglected. There are,
however, a number of situations where this simple model is not applicable.
For example, when the object is surrounded by glass plates optically scanned
to measure the pressure, the waves experience multiple reflections as they
would in a resonant cavity \cite{Cox-Cavity,cox2008photo,Kunyansky-Cavity}.
As a result, the assumption indispensable in the analysis of the classical
TAT and PAT about the fast decrease of acoustic energy within the object,
can no longer be made.

Thus, a novel approach is needed to solve the inverse source problem posed
within a resonant cavity. The latter problem has attracted attention of
analysts only recently. A particular case of a rectangular resonant cavity
was considered in \cite{Cox-Cavity,cox2008photo,Kunyansky-Cavity}, and
several solutions that exploit symmetries of such a geometry were proposed.
In \cite{Holman-Kunyansky,Stefanov-Yang,Acosta}, more general acquisition
geometries were considered; one of the main questions investigated in these
papers is the applicability of various modifications of the time reversal
algorithm to the problem under consideration.

Time reversal was successfully used by multiple authors, both theoretically
and numerically, to solve the inverse source problem in the free space
setting (\cite{FPR,HKN,US,HTRE,US-Num,US-Brain,Homan}). In the latter case,
it consists in solving the wave equation backwards in time, within the
domain $\Omega$ surrounded by the detectors, using the measured data to pose
the Dirichlet boundary condition. In the simplest case of constant speed of
sound $c(x)\equiv c_{0},$ the time reversal is initialized with zero
conditions inside the domain (at time $t=T$ for sufficiently large $T)$.
Such approach works due to the fact that the solution of the direct problem
in $\mathbb{R}^{3}$ with $c(x)\equiv c_{0}$ vanishes within the domain in a
finite time $T_{0}(\Omega)$, due to the Huygens principle (and so time $T$
is chosen to be greater or equal to $T_{0}(\Omega)).$ In 2D and/or when $c(x)
$ is not constant, more sophisticated methods are used to initialize time
reversal \cite{HKN,US,US-Num}; however, all these techniques require a time
decay of the solution of the direct problem.

The inverse source problem in a cavity with \emph{partially} reflecting
walls was solved in \cite{Acosta} using a time reversal approach. In this
case, due to the outflow of energy through the boundary of the domain,
solution of the direct problem still exhibits time decay, and, with some
modifications, time reversal can still be applied. A more difficult case is
that of \emph{perfectly} reflecting boundary (modeled by zero Neumann
condition on the boundary). In this case, the direct problems is energy
preserving, and acoustic oscillations in the cavity continue forever (here
we neglect the effects of attenuation of waves in the tissues). This
immediately makes classical time reversal inapplicable: the correct pressure
and its time derivative inside the domain are unknown at any time. The error
made when replacing these functions by any crude guess is not small. Under
the standard Dirichlet boundary condition, this error does not decrease as $t
$ approaches $0$.

Several attempts were made to modify the Dirichlet boundary condition so as
to control this error. In \cite{Holman-Kunyansky}, the boundary condition is
multiplied by a \ smooth cut off function $\eta(t/T),$ where $\eta(\tau)$
vanishes at $\tau=1$ with at least several derivatives. It was shown that
under certain conditions on the eigenvalues of the Dirichlet and Neumann
Laplacians on $\Omega,$ the approximate solutions corresponding to
measurement time $T$ converge to the correct one in the limit $
T\rightarrow\infty.$ However, for an arbitrary domain the aforementioned
conditions on the eigenvalues are difficult to verify. In \cite
{Stefanov-Yang}, the Dirichlet boundary condition was modified in such a way
that the computed function is the result of averaging of a family of
time-reversed solutions with different times $T$ (\textquotedblleft averaged
time reversal\textquotedblright). The authors showed that, in the case of
full boundary measurements, the operator that describes such a procedure is
a contraction. Therefore, the result can be used either as a crude
approximation, or a better solution can be computed by a converging Neumann
series (involving multiple solutions of the direct problem and averaged time
reversals). For the case when the data are available only on a part of the
boundary (i.e. for the \textquotedblleft partial data
problem\textquotedblright), theoretical analysis was not completed; however,
numerical experiments yielded successful reconstruction in this case, too.
In \cite{Acosta}, the problem with perfectly reflecting walls was considered
mostly from a theoretical standpoint, and its unique solvability was proven
under sufficiently general conditions.

In the present paper we propose a new time reversal technique in which the
Dirichlet boundary condition is replaced by a non-standard boundary
condition involving a linear combination of the normal- and time-
derivatives of the solution (see equation (\ref{E:Rev-p})). While this new
condition is satisfied by the solution of the direct problem, the error
resulting from the time reversal satisfies the wave equation with
dissipative boundary condition, and, thus, it decreases as time $t$
approaches $0$. We call this technique a \textquotedblleft dissipative time
reversal\textquotedblright. This method can be used either directly to
obtain good approximations to the initial pressure (which converges
exponentially as $T \to \infty$), or as a part of a Neumann series-based
iterative algorithm.\footnote{
Our idea of using the Neumann series-based iterative method is inspired by
the influential paper \cite{US}}. In addition to being efficient
numerically, our technique is relatively easy to analyze, allowing us to
deliver an intuitive proof for the convergences in both full and partial
data cases.

\section{Formulation of the problem and preliminaries}

\subsection{The direct problem}

Let $\Omega$ be a domain in $\mathbb{R}^{d}$ with smooth boundary $
\partial\Omega$. In the realistic setup of PAT and PAT, the dimension $d$
equals $3$. However, we will consider any $d \geq 2$, for the sake of
generality. We will denote by $\nu =\nu(x)$ the outward normal vector of $
\partial\Omega$ at $x$. The speed of sound $c(x)$ is a positive, infinitely
smooth function on $\mathbb{R}^d$.

For some positive time $T,$ let us consider the following mixed boundary
problem for the wave equation:
\begin{equation}
\left\{
\begin{array}{l}
u_{tt}(x,t)-c^{2}(x)\,\Delta u(x,t)=0,\quad(x,t)\in\Omega\times(0,T], \\
[4pt]
\frac{\partial}{\partial\nu}u(x,t)=0,\quad(x,t)\in\partial\Omega \times(0,T],
\\[4pt]
u(x,0)=u_{0}(x),~u_{t}(x,0)=u_{1}(x),\quad x\in\Omega,
\end{array}
\right.   \label{E:TATg}
\end{equation}
\newline
where $u_{0}$ and $u_{1}$ are arbitrary functions with $u_{0}\in
H_{0}^{1}(\Omega),$ $u_{1}\in L^{2}(\Omega).$ When $u_{1}\equiv0$ and $
u_{0}(x)$ coincides with the initial pressure $f(x)$ in the tissues, the
solution $u(x,t)$ of the above problem represents a model of TAT/PAT within
a resonant cavity formed by the reflecting walls (\cite
{Cox-Cavity,cox2008photo,Kunyansky-Cavity}). In this case, one attempts to
reconstruct initial pressure $f(x)$ from the measurements of $u(x,t)$ made
on some subset of boundary $\partial\Omega.$

For convenience, we consider here a slightly more general problem, where
both $u_{0}$ and $u_{1}$ are allowed to be non-zero. Let us assume that the
values of pressure $u(x,t)$ are measured on a part $\Gamma$ of the boundary $
\partial\Omega;$ we will consider both the case of full measurements when $
\Gamma$ coincides with $\partial\Omega,$ and the case of partial data when $
\Gamma$ is an open proper subset of $\partial\Omega$. We will denote the
measured data by $g(z,t):$%
\begin{equation}
g=u|_{\Gamma\times\lbrack0,T]}.   \label{E:def-g}
\end{equation}

Our goal is to solve the following problem:

\begin{problem}
\label{P:TATg} Find the pair $(u_{0},u_{1})$ from $g=u|_{\Gamma\times
\lbrack0,T]}$. In other words, invert the map
\begin{equation}
\Lambda_{T}:(u_{0},u_{1})\rightarrow g.   \label{E:map}
\end{equation}
\end{problem}

\subsection{Our approach to the inverse problem}

Let us fix a function $\lambda\in C^{\infty}(\partial\Omega)$ such that $
\lambda>0$ on $\Gamma$ and $\lambda=0$ on $\partial\Omega\setminus\overline{
\Gamma}.$ In addition, we will extend the data $g(z,t)$ to $\partial
\Omega\setminus\overline{\Gamma}\times\lbrack0,T]$ by zero. The main idea of
this paper is to find an approximation to $u_{0}$ and $u_{1}$ by solving the
following time reversal problem
\begin{equation}
\left\{
\begin{array}{l}
v_{tt}(x,t)-c^{2}(x)\,\Delta v(x,t)=0,\quad(x,t)\in\Omega\times\lbrack 0,T],
\\[4pt]
\frac{\partial}{\partial\nu}v(x,t)-\lambda(x)\,v_{t}(x,t)=-\lambda
(x)\,g_{t}(x,t),\quad(x,t)\in\partial\Omega\times\lbrack0,T], \\[4pt]
v(x,T)=0,~v_{t}(x,T)=0,\quad x\in\Omega.
\end{array}
\right.   \label{E:Rev-p}
\end{equation}

As we show below, for sufficiently large values of $T,$ the functions $v(x,0)
$ and $v_{t}(x,0)$ are good approximations to $u_{0}$ and $u_{1},$
correspondingly.

In particular, in the context of the inverse problem of TAT/PAT with $
u_{1}=0 $ and $u_{0}=f,$ the function $v(x,0)$ is a good approximation of $
f(x).$ One can either use this approximation directly, or to realize an
iterative refinement scheme converging to $f$ and corresponding to a
computation of a certain Neumann series$.$

The well-posedness of problem (\ref{E:Rev-p}), validity of a so-obtained
approximation and convergence of the Neumann series are the subject of the
following sections. The starting point, however, is the analysis of the
direct problem (\ref{E:TATg}).

\subsection{Properties of the direct problem}

Let us define the space of pairs of functions $\mathbb{H}$ as follows
\begin{equation*}
\mathbb{H}:=\{(u_{0},u_{1})|u_{0}\in H^{1}(\Omega),u_{1}\in L^{2}(\Omega)\},
\end{equation*}
where $H^{1}(\Omega)$ is the standard Sobolev space, with the norm defined,
for an arbitrary function $h(x)$ by
\begin{equation*}
\|h\|_{H^{1}(\Omega)}^{2}\equiv\int\limits_{\Omega}\left[ h^{2}(x)+\left|
\nabla h(x)\right| ^{2}\right] dx=\Vert
h\Vert_{L^{2}(\Omega)}^{2}+\Vert\nabla h\Vert_{L^{2}(\Omega)}^{2}.
\end{equation*}
Then, $\mathbb{H}$ is a Banach space under the norm $\Vert.\Vert$ defined by
\begin{equation*}
\Vert(u_{0},u_{1})\Vert^{2}=\Vert u_{0}\Vert_{H^{1}(\Omega)}^{2}+\Vert
c^{-1}u_{1}\Vert_{L^{2}(\Omega)}^{2}.
\end{equation*}
For $(u_{0},u_{1})\in\mathbb{H}$, we define
\begin{equation*}
\mathbb{E}(u_{0},u_{1})=\Vert\nabla u_{0}\Vert_{L^{2}(\Omega)}^{2}+\Vert
c^{-1}u_{1}\Vert_{L^{2}(\Omega)}^{2},
\end{equation*}
and the semi-norm $|.|$:
\begin{equation*}
|(u_{0},u_{1})|=[\mathbb{E}(u_{0},u_{1})]^{1/2}.
\end{equation*}
It is well known that for any $(u_{0},u_{1})\in\mathbb{H}$ equation~(\ref
{E:TATg}) has a unique solution $u$ lying within the following class $
\mathcal{K(}\Omega,T\mathcal{)}$:
\begin{equation*}
u\in\mathcal{K}(\Omega,T)\equiv\mathcal{C}([0,T];H^{1}(\Omega))~\cap ~
\mathcal{C}^{1}([0,T];L^{2}(\Omega)).
\end{equation*}
Therefore, $\Lambda_{T}$ (defined by (\ref{E:map})) is a well-defined map
from $\mathbb{H}$ to $\mathcal{C}([0,T];H^{1/2}(\partial \Omega))$.

\section{Well-posedness of a mixed boundary problem}

\label{S:Well} Let $\lambda\in C^{\infty}(\partial\Omega)$. In this section,
we study the following mixed boundary value problem:
\begin{equation}
\left\{
\begin{array}{l}
w_{tt}(x,t)-c^{2}(x)\,\Delta w(x,t)=0,\quad(x,t)\in\Omega\times(0,T), \\
[4pt]
\frac{\partial}{\partial\nu}w(x,t)+\lambda(x)\,w_{t}(x,t)=h_{t}(x,t),\quad
(x,t)\in\partial\Omega\times(0,T), \\[4pt]
w(x,0)=w_{0}(x),~w_{t}(x,0)=w_{1}(x),\quad x\in\Omega.
\end{array}
\right.   \label{E:Basic}
\end{equation}
Since this is not a standard problem, let us first define its weak solution.
We will follow the spirit of \cite{Bardos}, where the weak (distributional)
solution is defined for the case $h=0$. For our purposes, it is sufficient
to assume $(w_{0},w_{1})\in\mathbb{H}$ and $h\in\mathcal{C}
([0,T];H^{1/2}(\partial\Omega))$.

\begin{defi}
\label{D:Sol} Given $(w_{0},w_{1})\in\mathbb{H}$ and $h\in\mathcal{C}
([0,T];H^{1/2}(\partial\Omega))$, we say that $w\in\mathcal{D}
^{\prime}(\Omega\times(0,T))$ is a solution of (\ref{E:Basic}) if the
following equation holds for any test function $\psi\in\mathcal{D}
(\Omega\times(0,T))$:
\begin{align}
\left\langle w,\psi\right\rangle & =-
\iint_{\Omega\times\lbrack0,T]}c^{-2}(x)\,\big[w_{0}(x)\,
\Psi_{t}(x,0)-w_{1}(x)\,\Psi(x,0)\big]\,dx+\int_{\partial\Omega}\lambda(x)
\,w_{0}(x)\,\Psi(x,0)\,dx  \notag \\
&
-\iint_{\partial\Omega\times\lbrack0,T]}h(x,t)\Psi_{t}(x,t)\,dx\,dt-\int_{
\partial\Omega}h(x,0)\,\Psi(x,0)\,dx.   \label{E:weaksol}
\end{align}
Here, $\Psi$ is the solution of the dual problem
\begin{equation*}
\left\{
\begin{array}{l}
c^{-2}(x)\,\Psi_{tt}(x,t)-\Delta\Psi(x,t)=\psi(x,t),\quad(x,t)\in\Omega
\times(0,T), \\[4pt]
\partial_{\nu}\Psi(x,t)-\lambda(x)\,\Psi_{t}(x,t)=0,\quad(x,t)\in
\partial\Omega\times(0,T), \\[4pt]
\Psi(x,T)=0,~\Psi_{t}(x,T)=0,\quad x\in\Omega.
\end{array}
\right.
\end{equation*}
\end{defi}

The uniqueness of solution $w$ is obvious, since $\left\langle w,\psi
\right\rangle $ is defined by the right hand side of (\ref{E:weaksol}) for
all test functions $\psi.$ The existence and regularity of this solution are
more complicated. We only present here a partial result that is needed in
the present paper:

\begin{prop}
\label{P:Well} Assume that $h=0$ and $(w_{0},w_{1})\in\mathbb{H}$. Then,
problem (\ref{E:Basic}) has a unique solution
\begin{equation*}
w\in\mathcal{K}(\Omega,T).
\end{equation*}

\end{prop}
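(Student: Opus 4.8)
The plan is to treat \eqref{E:Basic} (with $h=0$) as an abstract second-order evolution equation and invoke the standard semigroup theory for dissipative wave equations with Robin-type (Wentzell/absorbing) boundary conditions. First I would introduce the energy Hilbert space $\mathcal{H}=H^1(\Omega)\times L^2(\Omega)$, but equipped with an inner product adapted to the problem: for $(\phi_0,\phi_1)$ one uses $\int_\Omega\bigl[\nabla\phi_0\cdot\overline{\nabla\psi_0}+\phi_0\overline{\psi_0}+c^{-2}\phi_1\overline{\psi_1}\bigr]\,dx$ (the extra $\phi_0\overline{\psi_0}$ term being harmless since we do not assume a Poincar\'e inequality here, as $\Omega$ need not have a Dirichlet portion). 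I then define the unbounded operator $A(\phi_0,\phi_1)=(\phi_1,\,c^2\Delta\phi_0)$ with domain $D(A)=\{(\phi_0,\phi_1)\in\mathcal{H}:\phi_1\in H^1(\Omega),\ c^2\Delta\phi_0\in L^2(\Omega),\ \partial_\nu\phi_0+\lambda\phi_1=0\text{ on }\partial\Omega\}$, so that \eqref{E:Basic} becomes $U'(t)=AU(t)$, $U(0)=(w_0,w_1)$.

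The key computation is the dissipativity estimate: for $U=(\phi_0,\phi_1)\in D(A)$, integrating by parts gives $\mathrm{Re}\langle AU,U\rangle_{\mathcal H}=\mathrm{Re}\int_\Omega(\nabla\phi_1\cdot\overline{\nabla\phi_0}+\phi_1\overline{\phi_0})+\mathrm{Re}\int_\Omega\Delta\phi_0\,\overline{\phi_1}=\mathrm{Re}\int_\Omega\phi_1\overline{\phi_0}+\mathrm{Re}\int_{\partial\Omega}\partial_\nu\phi_0\,\overline{\phi_1}=\mathrm{Re}\int_\Omega\phi_1\overline{\phi_0}-\int_{\partial\Omega}\lambda|\phi_1|^2$, which is bounded above by $\tfrac12\|U\|_{\mathcal H}^2$ since $\lambda\ge 0$ on $\Gamma$ and $\lambda=0$ elsewhere. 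Hence $A-\tfrac12 I$ is dissipative. One runs the same argument for the adjoint-type operator (the sign flip $\partial_\nu\phi_0-\lambda\phi_1=0$ coming from the backward-in-time dual problem) to see $A^\ast-\tfrac12 I$ is dissipative as well, or equivalently checks the range condition $\mathrm{Range}(\mu I-A)=\mathcal H$ for some $\mu>\tfrac12$ directly: solving $(\mu I-A)(\phi_0,\phi_1)=(f_0,f_1)$ reduces to the elliptic problem $\mu^2\phi_0-c^2\Delta\phi_0=\mu f_0+f_1$ with Robin condition $\partial_\nu\phi_0+\mu\lambda\phi_0=-\lambda f_0$, which is coercive and uniquely solvable by Lax--Milgram. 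By the Lumer--Phillips theorem, $A$ generates a $C_0$-semigroup $\{e^{tA}\}_{t\ge0}$ with $\|e^{tA}\|\le e^{t/2}$, giving a unique mild solution $U(t)=e^{tA}(w_0,w_1)\in\mathcal C([0,T];\mathcal H)$; reading off components yields $w\in\mathcal C([0,T];H^1(\Omega))\cap\mathcal C^1([0,T];L^2(\Omega))=\mathcal K(\Omega,T)$.

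Two points deserve care. First, one must verify that this semigroup solution actually satisfies the weak formulation of Definition~\ref{D:Sol} (with $h=0$): for $(w_0,w_1)\in D(A)$ this follows by pairing the strong equation against the dual solution $\Psi$ and integrating by parts in both $x$ and $t$ — the boundary term $\int_{\partial\Omega}\lambda\,w_0\,\Psi(x,0)$ in \eqref{E:weaksol} arises precisely from moving the $\lambda w_t$ boundary term onto $\Psi$ — and then one extends to general $(w_0,w_1)\in\mathbb H$ by density of $D(A)$ in $\mathcal H$ together with continuous dependence; uniqueness is already granted by the remark following Definition~\ref{D:Sol}. Second, the identification of $\mathcal H$ with $\mathbb H$ as a \emph{Banach} space (equivalent norms) and the fact that the $H^1\times L^2$ graph-norm estimates transfer to the stated class. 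I expect the main obstacle to be the rigorous treatment of the boundary trace $\lambda\phi_1|_{\partial\Omega}$: for $(\phi_0,\phi_1)\in D(A)$ one has $\phi_1\in H^1(\Omega)$ so its trace is in $H^{1/2}(\partial\Omega)$ and the Robin condition makes literal sense, but justifying the integration-by-parts identity $\int_\Omega\Delta\phi_0\,\overline{\phi_1}+\int_\Omega\nabla\phi_0\cdot\overline{\nabla\phi_1}=\int_{\partial\Omega}\partial_\nu\phi_0\,\overline{\phi_1}$ when $\phi_0$ is only known to have $\Delta\phi_0\in L^2$ requires the standard (but slightly technical) density/duality argument defining $\partial_\nu\phi_0\in H^{-1/2}(\partial\Omega)$ via Green's formula — this is where one should cite \cite{Bardos} or supply the routine lemma.
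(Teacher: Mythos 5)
Your proof is correct, but it takes a genuinely different route from the paper. The paper does not construct the solution via semigroup theory: it cites Ikawa's 1970 existence and regularity theorem for the mixed problem with $H^2\times H^1$ data satisfying the compatibility condition $\partial_\nu w_0+\lambda w_1=0$, approximates the given $(w_0,w_1)\in\mathbb H$ by such data (linear combinations of Neumann, resp.\ Dirichlet, eigenfunctions), and then uses the energy identity $\mathbb E(w^n-w^m,t_0)+\int_{\partial\Omega\times[0,t_0]}\lambda|w_t^n-w_t^m|^2=\mathbb E(w^n-w^m,0)$ to show the approximating solutions form a Cauchy sequence in $\mathcal K(\Omega,T)$, passing to the limit. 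Your Lumer--Phillips argument replaces the external regularity citation by the resolvent computation (Lax--Milgram for the coercive Robin problem), and in exchange gives a bit more: an explicit generation statement and growth bound for the solution operator, and a cleaner path to the density step since $D(A)$ is dense by construction. The paper's route buys economy — the same energy identity that drives the Cauchy estimate is the computation underlying the later contraction results, and Ikawa's theorem is already the natural companion to the Bardos--Lebeau--Rauch framework the paper relies on. Both arguments implicitly use $\lambda\ge 0$ (yours for dissipativity and coercivity, the paper's to discard the boundary integral), which is consistent with the standing assumptions on $\lambda$. Two trivial slips in your write-up: the bound $\mathrm{Re}\langle AU,U\rangle\le\tfrac12\|U\|^2$ should carry a constant depending on $\sup c^2$ unless $c\le1$ (irrelevant, since any bounded perturbation of a dissipative operator still generates a semigroup), and the Robin boundary datum in the resolvent equation should read $\partial_\nu\phi_0+\mu\lambda\phi_0=\lambda f_0$ rather than $-\lambda f_0$. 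Your final paragraph correctly identifies the one step both proofs must (and the paper only implicitly does) address, namely verifying that the constructed solution satisfies the distributional Definition~\ref{D:Sol}.
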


Let us recall a result by \cite{ikawa1970mixed}. Assume that $h =0$ and $
(w_{0},w_{1})\in H^{2}(\Omega)\times H^{1}(\Omega)$ satisfies the
compatibility condition
\begin{equation}
\frac{\partial}{\partial\nu}w_{0}+\lambda(x)\,w_{1}=0,\quad\mbox{ on }
\partial \Omega.   \label{E:comp1}
\end{equation}
Then, problem (\ref{E:Basic}) has a unique solution
\begin{equation*}
w\in\mathcal{C}([0,T]; H^{2}(\Omega))\cap\mathcal{C}^{1}([0,T];H^{1}(
\Omega))\cap\mathcal{C}^{2}([0,T];L^{2}(\Omega)).
\end{equation*}
Proposition~\ref{P:Well} can be proved by a simple approximation argument
which we present here for the sake of completeness.

\begin{proof}[\textbf{Proof of Proposition~\protect\ref{P:Well}}]
It suffices to prove the existence part, since the uniqueness is trivial as
mentioned above. Consider $(\varphi_{0}^{n},\varphi_{1}^{n}) \in
H^{2}(\Omega) \times H^{1}(\Omega)$ such that:

\begin{itemize}
\item[1)] $(\varphi_{0}^{n},\varphi_{1}^{n})\rightarrow(w_{0},w_{1})$ in $
H^{1}(\Omega)\times L^{2}(\Omega)$,

\medskip

\item[2)] $\partial_{\nu}\varphi^{n}_{0}|_{\partial \Omega}=0$ and $
\varphi_{1}^{n}|_{\partial\Omega}=0$.
\end{itemize}

Such a sequence $\{(\varphi_{0}^{n},\varphi_{1}^{n})\}$ always exists. For
instance, we can choose $\varphi_{0}^{n}$ to be a linear combination of
eigenvectors of the Neumann Laplacian. Meanwhile, $\varphi_{1}^{n}$ can be
chosen as a linear combination of eigenvectors of the Dirichlet Laplacian.

Consider problem (\ref{E:Basic}) with the initial condition $
(\varphi_{0}^{n},\varphi_{1}^{n})$ (instead of $(w_{0},w_{1})$). Since $
(\varphi_{0}^{n},\varphi_{1}^{n})$ satisfies the compatibility condition (
\ref{E:comp1}), problem (\ref{E:Basic}) with such initial condition has a
unique solution
\begin{equation*}
w^{n}\in\mathcal{C}([0,T]; H^{2}(\Omega))\cap\mathcal{C}^{1}([0,T];
H^{1}(\Omega))\cap\mathcal{C}^{2}([0,T];L^{2}(\Omega)).
\end{equation*}
Moreover, by simple integration by parts, we obtain for any $t_{0}\in
\lbrack0,T]$:
\begin{equation*}
\mathbb{E}(w^{n}-w^{m},t_{0})+\int_{\partial\Omega\times\lbrack0,t_{0}]}
\lambda(x)\,|w_{t}^{n}-w^{m}_{t}|^{2}\,dx\,dt=\mathbb{E}(w^{n}-w^{m},0).
\end{equation*}
Therefore,
\begin{equation*}
\mathbb{E}(w^{n}-w^{m},t_{0})\leq\mathbb{E}(w^{n}-w^{m},0).
\end{equation*}
We notice that
\begin{equation*}
\mathbb{E}(w^{n}-w^{m},0)=\Vert\nabla(\varphi_{0}^{n}-\varphi_{0}^{m})
\Vert_{L^{2}(\Omega)}^{2}+\Vert
c^{-1}(\varphi_{1}^{n}-\varphi_{1}^{m})\Vert_{L^{2}(\Omega)}^{2}
\rightarrow0.
\end{equation*}
Since
\begin{equation*}
\mathbb{E}(w^{n}-w^{m},t)=\Vert\nabla(w^{n}(t,.)-w^{m}(t,.))\Vert^{2}+\Vert
c^{-1}(w_{t}^{n}(t,.)-w_{t}^{m}(t,.))\Vert^{2},
\end{equation*}
we obtain $\{w^{n}\}$ is a Cauchy sequence in $\mathcal{C}
([0,T];H^{1}(\Omega))$ and $\{w_{t}^{n}\}$ is a Cauchy sequence in $\mathcal{
C}([0,T];L^{2}(\Omega))$. Therefore, there exists a function $w\in \mathcal{K
}(\Omega,T)$ such that
\begin{equation*}
\{w^{n}\}\rightarrow w\mbox{ in }\mathcal{C}([0,T],H^{1}(\Omega)),\quad
\mbox{ and }\{w_{t}^{n}\}\rightarrow w_{t}\mbox{ in }\mathcal{C}
([0,T];L^{2}(\Omega)).
\end{equation*}
It is easy to verify that $w$ is a solution of problem (\ref{E:Basic}), from
the definition~\ref{D:Sol}. This finishes our proof for the existence.
\end{proof}

Let us apply the above result to the time reversal problem (\ref{E:Rev-p}).

\begin{prop}
Problem (\ref{E:Rev-p}) has a unique solution $v\in\mathcal{K}(\Omega,T).$
\end{prop}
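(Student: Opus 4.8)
The plan is to reduce problem (\ref{E:Rev-p}) to the homogeneous problem already settled in Proposition~\ref{P:Well}, by peeling off the solution of the direct problem. Recall that $g$ is the measured data $g = u|_{\Gamma\times[0,T]}$, where $u\in\mathcal{K}(\Omega,T)$ solves the direct problem (\ref{E:TATg}) with some $(u_0,u_1)\in\mathbb{H}$. I would look for the solution $v$ of (\ref{E:Rev-p}) in the form $v = u - r$ and determine the problem that the ``error'' $r$ must solve. Substituting $v = u - r$ into (\ref{E:Rev-p}), the equation and terminal conditions force $r_{tt} - c^2\Delta r = 0$ with $r(\cdot,T) = u(\cdot,T)$, $r_t(\cdot,T) = u_t(\cdot,T)$, while on $\partial\Omega$ one gets $\partial_\nu r - \lambda\,r_t = \lambda\,(g_t - u_t)$.

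The key point is that $\lambda\,(g_t - u_t)\equiv 0$ on $\partial\Omega$: on $\Gamma$ we have $u = g$, hence $u_t = g_t$; on $\partial\Omega\setminus\overline{\Gamma}$ the factor $\lambda$ vanishes (and $g$ is the zero extension there). So $r$ solves a \emph{homogeneous} backward problem with the boundary condition $\partial_\nu r - \lambda\,r_t = 0$. Reversing time, $\tilde r(x,t) := r(x,T-t)$ satisfies precisely problem (\ref{E:Basic}) with $h = 0$ and Cauchy data $(\tilde r(\cdot,0),\tilde r_t(\cdot,0)) = (u(\cdot,T),-u_t(\cdot,T))$, which lies in $\mathbb{H}$ because $u\in\mathcal{K}(\Omega,T) = \mathcal{C}([0,T];H^1(\Omega))\cap\mathcal{C}^1([0,T];L^2(\Omega))$. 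Proposition~\ref{P:Well} then produces a unique $\tilde r\in\mathcal{K}(\Omega,T)$, hence $r\in\mathcal{K}(\Omega,T)$, and finally $v := u - r\in\mathcal{K}(\Omega,T)$ as a difference of two functions in that class.

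To finish, I would verify that this $v$ actually solves (\ref{E:Rev-p}) and that the solution is unique. For existence, one simply runs the computation backwards: $v = u - r$ satisfies the wave equation, has vanishing Cauchy data at $t = T$, and on $\partial\Omega$ satisfies $\partial_\nu v - \lambda\,v_t = -\lambda\,u_t = -\lambda\,g_t$ (once more using $u = g$ on $\Gamma$ and $\lambda\equiv 0$ off $\overline{\Gamma}$); the weak formulation of (\ref{E:Rev-p}) is the time-reversed image of Definition~\ref{D:Sol}, so this is seen by transporting the identity (\ref{E:weaksol}) through the change of variables $t\mapsto T-t$, exactly as in the last step of the proof of Proposition~\ref{P:Well}. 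For uniqueness, the difference of two solutions of (\ref{E:Rev-p}) solves the homogeneous backward problem with zero terminal data; time-reversing and applying the uniqueness in Proposition~\ref{P:Well} forces it to vanish.

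The only real obstacle is that Proposition~\ref{P:Well} handles only the source-free boundary condition ($h = 0$), whereas (\ref{E:Rev-p}) carries the nonzero boundary term $-\lambda\,g_t$, and the inhomogeneous well-posedness theory for (\ref{E:Basic}) is not developed in the paper. The device of subtracting the direct solution $u$ removes this term entirely, and it works precisely because of the compatibility between the cutoff $\lambda$ and the observation set $\Gamma$; the remaining verifications (the change of variables, the membership $(u(\cdot,T),u_t(\cdot,T))\in\mathbb{H}$, and the weak identity) are routine.
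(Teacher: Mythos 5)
Your proposal is correct and follows essentially the same route as the paper: the authors likewise set $v = u - U$, where $U$ solves the homogeneous backward problem (\ref{E:err}) with terminal data $(u(\cdot,T),u_t(\cdot,T))\in\mathbb{H}$, invoke Proposition~\ref{P:Well} for $U$, and verify that $v$ solves (\ref{E:Rev-p}), with uniqueness read off from Definition~\ref{D:Sol}. Your write-up is merely more explicit about the time-reversal change of variables and about why the boundary source $\lambda(g_t-u_t)$ vanishes.
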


\begin{proof}
We first notice that problem (\ref{E:Rev-p}) is the time reversed version of
problem (\ref{E:Basic}), considered in Section~\ref{S:Well}. The uniqueness
of the solution follows trivially from the definition~\ref{D:Sol}. We now
prove the existence. Let us consider the following problem
\begin{equation}
\left\{
\begin{array}{l}
U_{tt}(x,t)-c^{2}(x)\,\Delta U(x,t)=0,\quad(x,t)\in\Omega\times\lbrack 0,T],
\\[4pt]
\frac{\partial}{\partial\nu}U(x,t)-\lambda(x)\,U_{t}(x,t)=0,\quad
(x,t)\in\partial\Omega\times\lbrack0,T], \\[4pt]
U(x,T)=u(x,T),~U_{t}(x,T)=u_{t}(x,T),\quad x\in\Omega.
\end{array}
\right.   \label{E:err}
\end{equation}
where $u(x,t)$ is the solution of the direct problem (\ref{E:TATg}). Since $
(u(.,T),u_{t}(.,T))\in\mathbb{H}$, we obtain from Proposition~\ref{P:Well}
that the above problem has a (unique) solution $U\in\mathcal{K}(\Omega,T).$

We notice that $u$ also satisfies the boundary condition in problem (\ref
{E:Rev-p}).

Let $v=u-U$. It is easy to verify that $v$ is a solution of (\ref{E:Rev-p}).
Moreover, from the regularity of $u$ and $U$, we conclude that $v\in
\mathcal{K}(\Omega,T).$
\end{proof}

\section{Solution of the inverse problem}

\subsection{Contraction properties of the time reversal operator}

Similarly to \cite{Stefanov-Yang,Acosta}, our analysis on the inversion of $
\Lambda $ relies on known results on stabilization of waves \cite{Bardos}.
For the sake of simplicity, we assume that all the geodesics of $(\mathbb{R}
^{3},c^{-2}\,dx^{2})$ have finite contact order with the boundary $\partial
\Omega $. Under this condition, the generalized bi-characteristics of the
wave operator $\Box =\partial _{tt}-c^{2}(x)\,\Delta $ on $\overline{\Omega }
$ are uniquely defined (see, e.g., \cite{Bardos}). Their projections on the
physical space (i.e., $\overline{\Omega }$) are called the generalized rays.

\medskip

Throughout the paper, we will assume that the following condition is
satisfied:

\begin{condition}
\label{A:Gcc} There is a finite value $T(\Omega,\Gamma)>0$ such that every
generalized ray of length \footnote{
The length here is understood in the metric $c^{-2} \, dx^2$.
Condition~\ref{A:Gcc} means that all the singularities of the  solution of
the wave equation (\ref{E:TATg}) that start propagating at time $0$, traveling along
generalized bi-characteristics, reach the set $\Gamma$ within the
time interval $[0,T(\Omega,\Gamma)]$.}
$T(\Omega,\Gamma)$
intersects $\Gamma$ at one (or more) non-diffractive point(s).
\end{condition}

This is the \textbf{geometric control condition} (GCC), well known in
control theory. We refer the reader to \cite{Bardos} for a detailed
discussion of GCC. This condition was shown in \cite{Stefanov-Yang,Acosta}
to be sufficient for the stability of the inversion of $\Lambda$. Our
results are also based on the assumption that GCC is satisfied.

\medskip

Our work is based on the following fundamental result due to \cite{Bardos}:

\begin{prop}
\label{P:contract} Consider problem (\ref{E:Basic}) with $g=0$. Assume that
Condition~\ref{A:Gcc} holds. Then, there is $\delta(T)<1$ such that
\begin{equation*}
|(u(.,T),u_{t}(.,T))|\leq\delta(T)\,|(u_{0},u_{1})|.
\end{equation*}
\end{prop}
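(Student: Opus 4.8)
The plan is to deduce the contraction estimate from the standard energy identity for \eqref{E:Basic} together with the stabilization theorem of \cite{Bardos}, which under the geometric control condition asserts exponential (or at least uniform) decay of the boundary-damped wave energy. First I would establish the basic energy balance: multiplying the equation in \eqref{E:Basic} (with $h=0$, so $g=0$) by $c^{-2}u_t$ and integrating by parts over $\Omega\times[0,t_0]$, the boundary term produced by $\Delta u$ combines with the dissipative boundary condition $\partial_\nu u = -\lambda u_t$ to give
\begin{equation*}
\mathbb{E}(u(\cdot,t_0),u_t(\cdot,t_0)) + 2\int_{\partial\Omega\times[0,t_0]}\lambda(x)\,|u_t|^2\,dx\,dt = \mathbb{E}(u_0,u_1).
\end{equation*}
This already shows $|(u(\cdot,T),u_t(\cdot,T))|\le |(u_0,u_1)|$; the point of the Proposition is the \emph{strict} inequality $\delta(T)<1$, which cannot come from the energy identity alone and must use propagation of singularities.

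The heart of the argument is then to invoke the Bardos--Lebeau--Rauch result: under Condition~\ref{A:Gcc}, the semigroup generated by the wave operator with the dissipative Robin-type boundary condition $\partial_\nu + \lambda\,\partial_t$ is exponentially stable on the energy space modulo constants, i.e. there exist $C\ge 1$, $\omega>0$ with $\mathbb{E}(u(\cdot,t),u_t(\cdot,t)) \le C e^{-\omega t}\,\mathbb{E}(u_0,u_1)$ for all $t\ge 0$. Setting $\delta(T)^2 = C e^{-\omega T}$ and choosing $T$ large enough that this is $<1$ yields the claim. I would note that the energy space here is really $\dot H^1(\Omega)\times L^2(\Omega)$ (the seminorm $|\cdot|$ kills constants), and that $\lambda\ge 0$ with $\lambda>0$ exactly on $\Gamma$ is what lets GCC on $\Gamma$ feed into effective damping; the finite-contact-order assumption on geodesics guarantees the generalized bicharacteristic flow is well defined so that \cite{Bardos} applies verbatim.

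One technical subtlety to address: the decay statement in \cite{Bardos} is typically phrased for strong solutions with data in the domain of the generator, whereas Proposition~\ref{P:Well} only gives $u\in\mathcal{K}(\Omega,T)$. I would handle this exactly as in the proof of Proposition~\ref{P:Well} — approximate $(u_0,u_1)$ by smooth compatible data $(\varphi_0^n,\varphi_1^n)\in H^2\times H^1$ satisfying the compatibility condition, apply the decay estimate to the corresponding strong solutions $u^n$, and pass to the limit using the continuous dependence already proved (the map $(w_0,w_1)\mapsto(w(\cdot,t),w_t(\cdot,t))$ is nonexpansive in the $\mathbb{E}$-seminorm by the energy identity, so the inequality survives the limit). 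Alternatively, since $\delta(T)<1$ is a statement about a bounded operator, density of nice data in $\mathbb{H}$ suffices directly.

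\textbf{Main obstacle.} The genuinely nontrivial ingredient is the strict decay $\delta(T)<1$ itself, i.e. the Bardos--Lebeau--Rauch stabilization theorem; everything else is routine energy bookkeeping and a density argument. Since the paper explicitly cites \cite{Bardos} as the "fundamental result" it builds on, I expect the intended proof is essentially a citation plus the remark that GCC on $\Gamma$ combined with $\lambda>0$ on $\Gamma$ places us in the setting of that theorem, with the energy identity above recorded for completeness. The one place to be careful is bookkeeping the factor of $2$ and the sign in the boundary term so that the dissipation (not anti-dissipation) is manifest, and making sure the seminorm $|\cdot|$ rather than the full norm $\|\cdot\|$ is used throughout, since constants are in the kernel of the dynamics.
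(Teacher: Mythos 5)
Your proposal matches the paper's treatment: the paper offers no proof of Proposition~\ref{P:contract} at all, presenting it as a ``fundamental result due to \cite{Bardos}'', which is exactly the citation-plus-energy-identity route you describe. One minor caveat: the paper deduces exponential decay \emph{from} the proposition (Remark~\ref{R:exp}) rather than the reverse, so to get $\delta(T)<1$ already at the threshold $T=T(\Omega,\Gamma)$ you should quote the observability/stabilization estimate of \cite{Bardos} at the control time directly, rather than passing through $Ce^{-\omega T}<1$, which with $C>1$ only yields the strict contraction for sufficiently large $T$.
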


\begin{remark}
\label{R:exp}By applying Proposition \ref{P:contract} $k$ times, one obtains
\begin{equation*}
|(u(.,kT),u_{t}(.,kT))|\leq\delta^{k}(T)\,|(u_{0},u_{1})|.
\end{equation*}
It is easy to conclude that $|(u(.,T),u_{t}(.,T))|\rightarrow0$
exponentially as $T\rightarrow\infty$. That is, there are constant $
C_{1}(\Omega)$ and $a>0$ such that
\begin{equation*}
|(u(.,T),u_{t}(.,T))|\leq C_{1}(\Omega)e^{-a T}\,|(u_{0},u_{1})|.
\end{equation*}
\end{remark}

Given boundary data $g(x,t)$, we define the time reversal operator $A$
through the solution $v(x,t)$ of the problem (\ref{E:Rev-p}):
\begin{equation*}
Ag=(v(.,0),v_{t}(.,0)).
\end{equation*}

\begin{lemma}
\label{T:Main-p} Assume that condition\textbf{~\ref{A:Gcc}} holds and $T
\geq T(\Omega,\Gamma)$. Let $I$ denote the identity map, then there is $
\delta(T)<1$ such that
\begin{equation*}
|(I-A\Lambda_{T})(u_{0},u_{1})|\leq\delta(T)|(u_{0},u_{1})|, \quad \mbox{
for all } (u_0,u_1) \in \mathbb{H}.
\end{equation*}
In other words, $I-A\Lambda_{T}$ is a contraction on $\mathbb{H}$ under the
semi norm $|\cdot|$.
\end{lemma}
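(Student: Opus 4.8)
The plan is to express the error $(I-A\Lambda_T)(u_0,u_1)$ explicitly as the initial data at time $0$ of the auxiliary function $U$ introduced in the proof of the previous proposition, and then invoke Proposition~\ref{P:contract} via a time-reversal symmetry. First I would let $u$ be the solution of the direct problem (\ref{E:TATg}) with data $(u_0,u_1)$, so that $g=\Lambda_T(u_0,u_1)=u|_{\Gamma\times[0,T]}$, extended by zero to $\partial\Omega\setminus\overline\Gamma$. Let $v$ be the solution of the time-reversal problem (\ref{E:Rev-p}) with this $g$, so $A\Lambda_T(u_0,u_1)=(v(\cdot,0),v_t(\cdot,0))$. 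From the proof of the preceding proposition we have the decomposition $v=u-U$, where $U$ solves (\ref{E:err}) with the homogeneous dissipative boundary condition $\partial_\nu U-\lambda U_t=0$ and terminal data $U(\cdot,T)=u(\cdot,T)$, $U_t(\cdot,T)=u_t(\cdot,T)$. Evaluating at $t=0$ gives
\begin{equation*}
(I-A\Lambda_T)(u_0,u_1)=(u_0,u_1)-(v(\cdot,0),v_t(\cdot,0))=(U(\cdot,0),U_t(\cdot,0)).
\end{equation*}

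Next I would reduce the bound on $(U(\cdot,0),U_t(\cdot,0))$ to Proposition~\ref{P:contract}. The function $U$ solves a backward wave equation on $[0,T]$ with homogeneous boundary condition $\partial_\nu U-\lambda U_t=0$; reversing time, i.e.\ setting $\tilde U(x,t):=U(x,T-t)$, turns this into the forward problem (\ref{E:Basic}) with $g=0$ (equivalently $h=0$), with boundary condition $\partial_\nu\tilde U+\lambda\tilde U_t=0$, and with initial data $(\tilde U(\cdot,0),\tilde U_t(\cdot,0))=(u(\cdot,T),-u_t(\cdot,T))$. Note $|(u(\cdot,T),-u_t(\cdot,T))|=|(u(\cdot,T),u_t(\cdot,T))|$ because the semi-norm $|\cdot|$ depends on $u_1$ only through $\|c^{-1}u_1\|_{L^2}$. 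Applying Proposition~\ref{P:contract} to $\tilde U$ yields
\begin{equation*}
|(U(\cdot,0),U_t(\cdot,0))|=|(\tilde U(\cdot,T),\tilde U_t(\cdot,T))|\leq\delta(T)\,|(u(\cdot,T),u_t(\cdot,T))|.
\end{equation*}

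Finally I would control $|(u(\cdot,T),u_t(\cdot,T))|$ by $|(u_0,u_1)|$. The solution $u$ of the direct problem (\ref{E:TATg}) has the Neumann boundary condition $\partial_\nu u=0$, so its energy is conserved: the integration-by-parts identity used in the proof of Proposition~\ref{P:Well}, applied with $\lambda\equiv 0$ on the boundary term, gives $\mathbb{E}(u,T)=\mathbb{E}(u,0)$, i.e.\ $|(u(\cdot,T),u_t(\cdot,T))|=|(u_0,u_1)|$. Chaining the three displays gives $|(I-A\Lambda_T)(u_0,u_1)|\leq\delta(T)\,|(u_0,u_1)|$ with the same $\delta(T)<1$ as in Proposition~\ref{P:contract}, valid once $T\geq T(\Omega,\Gamma)$ so that Condition~\ref{A:Gcc} applies.

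The step I expect to require the most care is the time-reversal bookkeeping for $U$: one must check that the reversed function $\tilde U$ genuinely satisfies (\ref{E:Basic}) with $h=0$ — in particular that the sign of the damping term flips correctly so the dissipative boundary condition of (\ref{E:err}) becomes the stabilizing condition of Proposition~\ref{P:contract} rather than an anti-damped one — and that the extension of $g$ by zero outside $\Gamma$ is consistent with $\lambda$ vanishing there, so that $u$ itself satisfies the boundary condition in (\ref{E:Rev-p}) and the decomposition $v=u-U$ is legitimate. Everything else is either the explicit algebra above or a direct citation of the two previously established propositions.
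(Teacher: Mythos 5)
Your proposal is correct and follows essentially the same route as the paper's proof: identify $(I-A\Lambda_T)(u_0,u_1)$ with $(U(\cdot,0),U_t(\cdot,0))$ via the decomposition $v=u-U$, use conservation of energy for the Neumann problem to get $|(u_0,u_1)|=|(u(\cdot,T),u_t(\cdot,T))|$, and apply Proposition~\ref{P:contract} to the dissipative problem for $U$. The only difference is that you spell out the time-reversal substitution $\tilde U(x,t)=U(x,T-t)$ and the sign check on the damping term, which the paper leaves implicit; that is a welcome clarification, not a deviation.
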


\begin{proof}
Let $(u_{0},u_{1})\in \mathbb{H}$ and $u$ be the solution of (\ref{E:TATg}).
We observe that
\begin{equation*}
(I-A\Lambda _{T})(u_{0},u_{1})=(U(.,0),U_{t}(.,0)),
\end{equation*}
where $U$ is the solution of the problem (\ref{E:err}).

Due to the well-known conservation of energy in the solution of the wave
equation with Neumann boundary condition,
\begin{equation*}
\mathbb{E}(u_{0},u_{1})=\mathbb{E}(u(.,T),u_{t}(.,T)),
\end{equation*}
or
\begin{equation*}
|(u_{0},u_{1})|=|(u(.,T),u_{t}(.,T))|.
\end{equation*}
Due to Proposition~\ref{P:contract},
\begin{equation*}
|(U(.,0),U_{t}(.,0))|\leq\delta(T)\,|(u(.,T),u_{t}(.,T))|,
\end{equation*}
and the proof follows.
\end{proof}

\begin{remark}
\label{R:Exp-p} Using Remark \ref{R:exp}, instead of Proposition~\ref
{P:contract}, in the above proof, we obtain that there exist constants $
C_{1}(\Omega)$ and $a>0$ such that
\begin{equation*}
|(I-A\Lambda_{kT})(u_{0},u_{1})|\leq C_{1}(\Omega)e^{-aT}|(u_{0},u_{1})|.
\end{equation*}
That is, the\ induced seminorm of $I-A\Lambda_{T}$ decreases exponentially
as $T\rightarrow\infty$.
\end{remark}

Lemma \ref{T:Main-p} describes the contraction property of our time reversal
operator in the semi-norm $|.|$. By itself, this result is not sufficient to
prove the convergence under the norm $\|.\|.$ However, by restricting our
attention to appropriate subspaces of $\mathbb{H}$,  such a convergence can
be attained. Indeed, let us consider the subspace $\mathbb{H}_{0}$ of $
\mathbb{H}$ defined by
\begin{equation*}
\mathbb{H}_{0}\equiv\left\{ \mathbf{h}=(h_{0,}h_{1})\in\mathbb{H}\
\left\vert \ {\int\limits_{\partial\Omega}h_{0}\,dx=0}\right. \right\} ,
\end{equation*}
and the subspace $\mathbb{H}_{1}$ of $\mathbb{H}_{0}$ consisting of pairs
with the second component equal to zero:
\begin{equation*}
\mathbb{H}_{1}\equiv\left\{ \mathbf{h}=(h_{0,}0)\in\mathbb{H}_{0}\right\} .
\end{equation*}

Let us introduce two projectors $\Pi_{0}$ and $\Pi_{1}$ mapping the elements
of $\mathbb{H}$ into $\mathbb{H}_{0}$ and $\mathbb{H}_{1},$ respectively:
\begin{align*}
\Pi_{0}\mathbf{h} & \equiv\Pi_{0}(h_{0,}h_{1})\equiv\left( h_{0}-\frac {1}{
|\partial\Omega|}{\int\limits_{\partial\Omega}h_{0},h_{1}}\right) {,} \\
\Pi_{1}\mathbf{h} & \equiv\Pi_{1}(h_{0,}h_{1})\equiv\left( h_{0}-\frac {1}{
|\partial\Omega|}{\int\limits_{\partial\Omega}h_{0},0}\right) {,}
\end{align*}
where $|\partial\Omega|$ is the surface area of $\partial\Omega$. These
projectors do not increase the semi-norm $|.|$. That is,
\begin{equation*}
|\Pi_{0}\mathbf{h|\leq}|\mathbf{h|,}\quad|\Pi_{1}\mathbf{h|\leq}|\mathbf{h|}
.
\end{equation*}
Moreover, the subspaces $\mathbb{H}_{0}$ and $\mathbb{H}_{1}$ are invariant
under compositions $\Pi_{0}(I-A\Lambda_{T})$ and $\Pi_{1}(I-A\Lambda_{T}).$
In addition,
\begin{align*}
\Pi_{0}(I-A\Lambda_{T})\mathbf{h} & \mathbf{=}(I-\Pi_{0}A\Lambda _{T})
\mathbf{h,\qquad\forall h}\text{ }\in\mathbb{H}_{0}, \\
\Pi_{1}(I-A\Lambda_{T})\mathbf{h} & \mathbf{=}(I-\Pi_{1}A\Lambda _{T})
\mathbf{h,\qquad\forall h}\text{ }\in\mathbb{H}_{1}.
\end{align*}
Therefore, in accordance with Lemma \ref{T:Main-p}, operators $(I-\Pi
_{0}A\Lambda_{T})$ and $(I-\Pi_{1}A\Lambda_{T})$ are contractions in $
\mathbb{H}_{0}$ and $\mathbb{H}_{1}$ correspondingly, under the seminorm $
|.|:$
\begin{align}
\mathbf{|}(I-\Pi_{0}A\Lambda_{T})\mathbf{h|} & \mathbf{\leq|}(I-A\Lambda
_{T})\mathbf{h}|\leq\delta(T)\mathbf{|h|,\qquad\forall h}\text{ }\in \mathbb{
H}_{0},  \label{E:seminorm0} \\
\mathbf{|}(I-\Pi_{1}A\Lambda_{T})\mathbf{h|} & \mathbf{\leq\mathbf{|}}
(I-A\Lambda_{T})\mathbf{\mathbf{h|}}\leq\delta(T)\mathbf{|h|,\qquad\forall h}
\text{ }\in\mathbb{H}_{1}.   \label{E:seminorm1}
\end{align}
Now we can invert operator $\Lambda_{T}$, restricted to $\mathbb{H}_{0}$, by
constructing converging Neumann series
\begin{equation*}
\sum_{k=0}^{\infty}(I-\Pi_{0}A\Lambda_{T})^{k}\Pi_{0}A
\Lambda_{T}(u_{0},u_{1})=\sum_{k=0}^{\infty}(I-\Pi_{0}A\Lambda_{T})^{k}
\Pi_{0}Ag.
\end{equation*}
Below we show that this series converges not only under the semi-norm $|.|$
but also in the norm $\|.\|.$ In other words, we claim that the partial sums
$\mathbf{u}^{(n)}$of these series
\begin{equation}
\mathbf{u}^{(k)}=\sum_{j=0}^{k}(I-\Pi_{0}A\Lambda_{T})^{j}\Pi_{0}Ag
\label{E:Neumann-partial}
\end{equation}
converge to $\mathbf{u}=(u_{0},u_{1})$ in the norm $\|.\|.$ These partial
sums can be easily computed by the following iterative algorithm
\begin{align}
\mathbf{u}^{(0)} & =0,  \notag \\
\mathbf{u}^{(k+1)} & =(I-\Pi_{0}A\Lambda_{T})\mathbf{u}^{(k)}+\Pi_{0}Ag.
\label{E:iterations-general}
\end{align}

\medskip

The following theorem gives us a solution to Problem~\ref{P:TATg} by a
Neumann series:

\begin{theorem}
\label{T:Neumann-ser} Suppose that condition\textbf{~\ref{A:Gcc}} holds and
the observation time $T$ satisfies $T\geq T(\Omega,\Gamma).$ Then, the
iterations $\mathbf{u}^{(k)}$ defined by (\ref{E:Neumann-partial}) (or,
equivalently, by (\ref{E:iterations-general})) converge to $\mathbf{u}$ in
norm $\|.\|$, as follows:
\begin{equation*}
\Vert\mathbf{u}-\mathbf{u}^{(k)}\Vert\leq C_{P}(\Omega)\,\delta(T)^{k}\Vert
\mathbf{u}\Vert, \quad \mbox{ for all } \mathbf{u} \in \mathbb{H}_0,
\end{equation*}
with some constant $C_{P}(\Omega)>1$ specified below.
\end{theorem}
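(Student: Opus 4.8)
The plan is to leverage the seminorm contraction from Lemma~\ref{T:Main-p} (in the form~\eqref{E:seminorm0}) together with a Poincar\'e-type inequality that upgrades control of the seminorm $|\cdot|$ to control of the full norm $\|\cdot\|$ on the subspace $\mathbb{H}_0$. The key observation is that on $\mathbb{H}_0$ the seminorm $|\cdot|$ and the norm $\|\cdot\|$ are equivalent: indeed, $\mathbb{E}(h_0,h_1) = \|\nabla h_0\|_{L^2}^2 + \|c^{-1}h_1\|_{L^2}^2$, and the only thing the seminorm fails to control is $\|h_0\|_{L^2}^2$; but for $h_0$ with $\int_{\partial\Omega} h_0\,dx = 0$ (the trace being well-defined and the integral making sense via $H^1(\Omega)\hookrightarrow H^{1/2}(\partial\Omega)$), a Poincar\'e inequality of the form $\|h_0\|_{L^2(\Omega)} \leq C(\Omega)\,\|\nabla h_0\|_{L^2(\Omega)}$ holds. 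Hence there is a constant $C_P(\Omega) > 1$ with
\begin{equation*}
|\mathbf{h}| \leq \|\mathbf{h}\| \leq C_P(\Omega)\,|\mathbf{h}|, \qquad \forall\, \mathbf{h} \in \mathbb{H}_0.
\end{equation*}

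Next I would identify the error of the Neumann iteration with an iterate of the contraction. Recall $\mathbf{u} = (u_0,u_1) \in \mathbb{H}_0$ and $g = \Lambda_T \mathbf{u}$, so $\Pi_0 A g = \Pi_0 A \Lambda_T \mathbf{u}$. A standard telescoping/induction on~\eqref{E:iterations-general} gives
\begin{equation*}
\mathbf{u} - \mathbf{u}^{(k)} = (I - \Pi_0 A \Lambda_T)^{\,k+1}\,\mathbf{u} \quad\text{(or }=(I-\Pi_0A\Lambda_T)^k\mathbf{u}\text{, depending on indexing)},
\end{equation*}
which one checks by verifying the base case $\mathbf{u}^{(0)} = 0$ and the recursion, using that $\mathbf{u}$ is a fixed point of the affine map $\mathbf{h}\mapsto (I-\Pi_0A\Lambda_T)\mathbf{h} + \Pi_0 A g$. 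Since $\mathbb{H}_0$ is invariant under $I - \Pi_0 A\Lambda_T$, each iterate stays in $\mathbb{H}_0$, so the norm equivalence applies at every stage. Then
\begin{equation*}
\|\mathbf{u} - \mathbf{u}^{(k)}\| \leq C_P(\Omega)\,\big|(I-\Pi_0A\Lambda_T)^{k}\mathbf{u}\big| \leq C_P(\Omega)\,\delta(T)^{k}\,|\mathbf{u}| \leq C_P(\Omega)\,\delta(T)^{k}\,\|\mathbf{u}\|,
\end{equation*}
where the middle inequality is~\eqref{E:seminorm0} applied $k$ times and the last is the trivial side of the norm equivalence. This yields the claimed bound.

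The main obstacle is establishing the Poincar\'e inequality in the precise form needed, i.e. justifying that the normalization $\int_{\partial\Omega} h_0\,dx = 0$ (a condition on the boundary trace) suffices to bound $\|h_0\|_{L^2(\Omega)}$ by $\|\nabla h_0\|_{L^2(\Omega)}$. This is a compactness/contradiction argument: if no such constant existed, one would extract a sequence $h_0^{(n)}$ with $\|\nabla h_0^{(n)}\|_{L^2} \to 0$, $\|h_0^{(n)}\|_{L^2} = 1$; by Rellich the sequence has an $L^2$-convergent subsequence with limit $h$ satisfying $\nabla h = 0$, hence $h$ is a constant on the (connected) domain $\Omega$, with $\|h\|_{L^2} = 1$ so $h \neq 0$; but the boundary-trace functional $h\mapsto \int_{\partial\Omega} h\,dx$ is continuous on $H^1(\Omega)$ and vanishes on each $h_0^{(n)}$, hence on $h$, forcing $h|\partial\Omega|\cdot(\text{const}) = 0$, a contradiction. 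One should also note the harmless points: connectedness of $\Omega$ may be assumed (or the argument runs componentwise), the constant $C_P(\Omega) > 1$ can always be enlarged to exceed $1$, and the projectors $\Pi_0$ leave $\mathbb{H}_0$ invariant so that all iterates remain in the subspace where the inequality is valid. Everything else is the bookkeeping of the telescoping identity, which is routine.
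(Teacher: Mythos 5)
Your proposal is correct and follows essentially the same route as the paper: the telescoping identity $\mathbf{u}-\mathbf{u}^{(k)}=(I-\Pi_{0}A\Lambda_{T})^{k}\mathbf{u}$, the seminorm contraction (\ref{E:seminorm0}) applied $k$ times, and the norm equivalence on $\mathbb{H}_{0}$ obtained from the generalized Poincar\'e inequality, which the paper also proves by the same compactness/contradiction argument (Lemma~\ref{L:Poincare} and Corollary~\ref{C:cor}). The only cosmetic point is the off-by-one ambiguity in the exponent, which traces back to a mismatch between (\ref{E:Neumann-partial}) and (\ref{E:iterations-general}) in the paper itself and does not affect the stated bound.
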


To prove the above theorem, we will need the following generalization of the
Poincare inequality.

\begin{lemma}
\label{L:Poincare} There is a constant $C_{P}(\Omega)>1$ such that for all $
h\in H^{1}(\Omega)$ satisfying $\int_{\partial\Omega}h\,dx=0$ the following
inequality holds:
\begin{equation*}
\Vert h\Vert_{H^{1}(\Omega)}\leq C_{P}(\Omega)\Vert\nabla
h\Vert_{L^{2}(\Omega)}.   \label{E:Poincare}
\end{equation*}
\end{lemma}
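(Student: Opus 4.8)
The plan is to prove the generalized Poincaré inequality
\[
\|h\|_{H^1(\Omega)} \le C_P(\Omega)\,\|\nabla h\|_{L^2(\Omega)}
\qquad \text{for all } h \in H^1(\Omega) \text{ with } \int_{\partial\Omega} h\,dx = 0
\]
by the standard compactness/contradiction argument. The key observation is that the linear functional $h \mapsto \int_{\partial\Omega} h\,dx$ is bounded on $H^1(\Omega)$ (by the trace theorem, since $\partial\Omega$ is smooth and bounded), so the constraint defines a closed subspace, and on this subspace the seminorm $\|\nabla\cdot\|_{L^2}$ should control the full norm because the only functions killed by the seminorm are constants, and the only constant satisfying the constraint is $0$.

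First I would set $V := \{h \in H^1(\Omega) : \int_{\partial\Omega} h\,dx = 0\}$, a closed subspace of $H^1(\Omega)$. Suppose for contradiction that no such constant exists. Then there is a sequence $\{h_n\} \subset V$ with $\|h_n\|_{H^1(\Omega)} = 1$ and $\|\nabla h_n\|_{L^2(\Omega)} \to 0$. Since $\{h_n\}$ is bounded in $H^1(\Omega)$ and the embedding $H^1(\Omega) \hookrightarrow L^2(\Omega)$ is compact (Rellich–Kondrachov, using smoothness/boundedness of $\Omega$), after passing to a subsequence $h_n \to h$ strongly in $L^2(\Omega)$. Combined with $\nabla h_n \to 0$ in $L^2(\Omega)$, the sequence $\{h_n\}$ is Cauchy in $H^1(\Omega)$, hence converges in $H^1(\Omega)$ to some $h$ with $\nabla h = 0$; since $\Omega$ is connected, $h$ is a constant $c$, and $\|h\|_{H^1(\Omega)} = 1$ forces $c \neq 0$.

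Next I would derive the contradiction from the boundary constraint: since $h_n \to h \equiv c$ in $H^1(\Omega)$, the trace operator $\gamma: H^1(\Omega) \to L^2(\partial\Omega)$ is continuous, so $\gamma h_n \to \gamma h = c$ in $L^2(\partial\Omega)$, and therefore $\int_{\partial\Omega} h_n\,dx \to c\,|\partial\Omega|$. But each $h_n \in V$, so $\int_{\partial\Omega} h_n\,dx = 0$ for all $n$, giving $c\,|\partial\Omega| = 0$, whence $c = 0$ — contradicting $c \neq 0$. Thus the desired constant $C_P(\Omega)$ exists; it can be taken strictly larger than $1$ since $\|h\|_{H^1(\Omega)}^2 = \|h\|_{L^2(\Omega)}^2 + \|\nabla h\|_{L^2(\Omega)}^2 \ge \|\nabla h\|_{L^2(\Omega)}^2$ already forces any valid constant to be $\ge 1$, and it cannot equal $1$ unless some nonzero $h \in V$ has $\|h\|_{L^2(\Omega)} = 0$, which is impossible.

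The main obstacle is essentially bookkeeping rather than depth: one must be sure the two ingredients — compactness of $H^1(\Omega) \hookrightarrow L^2(\Omega)$ and continuity of the trace $H^1(\Omega) \to L^2(\partial\Omega)$ — are legitimately available, which they are given the standing assumption that $\partial\Omega$ is smooth (and implicitly that $\Omega$ is bounded and connected, as is the case for a TAT/PAT domain). If one wanted to avoid invoking connectedness, one could instead note that on each connected component the argument runs with the constraint distributed over that component's boundary piece; but for the intended application $\Omega$ is connected, so the argument above suffices.
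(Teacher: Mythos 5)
Your proof is correct and follows essentially the same compactness/contradiction argument as the paper: a normalized sequence with vanishing gradients, Rellich--Kondrachov, identification of the limit as a constant, and the boundary-integral constraint (via the trace) forcing that constant to be zero. The only cosmetic difference is that you normalize $\Vert h_{n}\Vert_{H^{1}(\Omega)}=1$ and upgrade to strong $H^{1}$ convergence before taking traces, whereas the paper normalizes $\Vert h_{n}\Vert_{L^{2}(\Omega)}=1$ and uses weak convergence of the traces; both are sound.
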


\begin{proof}
{Indeed, assume that the above statement is not true. Then, there exists a
sequence $\{h_{n}\}_{n=1}^{\infty}\subset H^{1}(\Omega)$ such that
\begin{equation*}
\Vert h_{n}\Vert_{L^{2}(\Omega)}=1\quad\text{and}\quad\lim_{n\rightarrow
\infty}\Vert\nabla h_{n}\Vert_{L^{2}(\Omega)}=0.
\end{equation*}
It follows that $\{h_{n}\}$ is bounded in $H^{1}(\Omega).$ Therefore, there
is a weakly converging in $H^{1}(\Omega)$ subsequence $\{h_{n_{k}}\}_{k=1}^{
\infty}$ and function }$u\in${\ $H^{1}(\Omega),$ such that,
\begin{align*}
h_{n_{k}} & \rightarrow u\mbox{ strongly in }L^{2}(\Omega), \\
h_{n_{k}} & \rightarrow u\mbox{ weakly in
}H^{1}(\Omega), \\
h_{n_{k}} & \rightarrow u\mbox{ weakly in }H^{1/2}(\partial\Omega).
\end{align*}
The limit }$u$ has the following three properties{\
\begin{equation*}
\Vert u\Vert_{L^{2}(\Omega)}=1,\quad\Vert\nabla
u\Vert_{L^{2}(\Omega)}=0,\quad\int\limits_{\partial\Omega}u\,dx=0.
\end{equation*}
The last two equations yield $u\equiv0$, which contradicts to the first
property }$\Vert u\Vert_{L^{2}(\Omega)}=1${, thus completing the proof.}
\end{proof}

\begin{corollary}
\label{C:cor} There is a constant $C_{P}(\Omega)\,>1$ (given by the above
Lemma) such that the following inequality holds:
\begin{equation}
|\mathbf{h}|\leq\Vert\mathbf{h}\Vert\leq C_{P}(\Omega)|\mathbf{h}|, \quad
\mbox{ for all } \mathbf{h} \in \mathbb{H}_0.   \label{E:Corollary}
\end{equation}
\end{corollary}

\begin{proof}[\textbf{Proof of Theorem~\protect\ref{T:Neumann-ser}}]
By a simple induction argument applied to the recurrence relation (\ref
{E:iterations-general}) one obtains the following identity:
\begin{equation*}
\mathbf{u}-\mathbf{u}^{(k)}=(I-\Pi_{0}A\Lambda_{T})^{k}\mathbf{u},\quad
k=0,1,2,3,...
\end{equation*}
Since $\mathbf{u}\in\mathbb{H}_{0}$, applying inequality (\ref{E:seminorm0})
results in the following inequality
\begin{equation*}
|\mathbf{u}-\mathbf{u}^{(k)}|\leq\delta(T)^{k}|\mathbf{u}|.
\end{equation*}
Further, using inequalities (\ref{E:Corollary}), one can transition to the
following norm estimate
\begin{equation*}
\|\mathbf{u}-\mathbf{u}^{(k)} \| \leq C_{P}(\Omega)\delta(T)^{k} \|\mathbf{u}
\|,
\end{equation*}
thus, proving convergence of the Neumann series in the norm $\|.\|.$
\end{proof}

\begin{remark}
\label{myremark}We note that, due to Remark~\ref{R:Exp-p},
\begin{equation}
\Vert\mathbf{u}-\Pi_{0}Ag\Vert\leq C_{1}(\Omega)C_{P}(\Omega)e^{-aT}\Vert
\mathbf{u}\Vert.
\end{equation}
Therefore, when $T$ is sufficiently large, $\mathbf{u}^{(1)}=\Pi_{0}Ag$ is a
good approximation to $\mathbf{u}$.
\end{remark}

\subsection{Inverse problem of TAT/PAT}

The inverse problem of TAT/PAT is a particular case of Problem~\ref{P:TATg}
with $u_{0}=f$ and $u_{1}=0.$ Therefore, $f$ can be recovered from $g$ by
using the algorithm described in the previous section. However, the
convergence can be accelerated and computations simplified by projecting
computed approximations onto space $\mathbb{H}_{1}$ rather than $\mathbb{H}
_{0}.$

Let us introduce the notation $\mathbf{f}=(f,0).$ The measured data $g$ are
still defined by equation (\ref{E:def-g}) with $u(x,t)$ being a solution of
the direct problem (\ref{E:TATg}) with initial conditions
\begin{equation*}
(u(0,x),u_{t}(0,x))=\mathbf{f}(x).
\end{equation*}
We show below that the partial sums
\begin{equation*}
\mathbf{u}^{(k)}=\sum_{j=0}^{k}(I-\Pi_{1}A\Lambda_{T})^{j}\Pi_{1}Ag;
\end{equation*}
of the following Neumann series converge to $\mathbf{f}$ in norm $\|.\|$.
These sums are easy to compute using the following iterative relation
\begin{align}
\mathbf{u}^{(0)} & =0,  \notag \\
\mathbf{u}^{(k+1)} & =(I-\Pi_{1}A\Lambda_{T})\mathbf{u}^{(k)}+\Pi_{1}Ag.
\label{E:final-alg}
\end{align}

\begin{theorem}
\label{T:TAT} Assume that the initial conditions of Problem ~\ref{P:TATg} is
given by $(u_{0},u_{1})=\mathbf{f}.$ Suppose also condition\textbf{~\ref
{A:Gcc}} is satisfied and $T\geq T(\Omega,\Gamma).$ Then, iterations $
\mathbf{u}^{(k)}$ defined by (\ref{E:Neumann-partial}) (or, equivalently, by
(\ref{E:iterations-general})) converge to $\mathbf{f}$ in norm $\|.\|$, as
follows
\begin{equation*}
\Vert\mathbf{f}-\mathbf{u}^{(k)}\Vert\leq C_{P}(\Omega)\,\delta(T)^{k}\Vert
\mathbf{f}\Vert.
\end{equation*}
\end{theorem}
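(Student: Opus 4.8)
The plan is to recognize Theorem~\ref{T:TAT} as the specialization of Theorem~\ref{T:Neumann-ser} to target pairs of the form $\mathbf{f}=(f,0)$, and to carry the argument out by first verifying that $\mathbf{f}$ lies in the subspace $\mathbb{H}_0$ on which Theorem~\ref{T:Neumann-ser} was proved. The iterations named in the statement, (\ref{E:Neumann-partial}) and (\ref{E:iterations-general}), are precisely the $\Pi_0$-based Neumann iterations treated in the previous subsection, and the measured data are $g=\Lambda_T\mathbf{f}$. Hence, once the membership $\mathbf{f}\in\mathbb{H}_0$ is confirmed, the estimate follows at once from Theorem~\ref{T:Neumann-ser} applied with $\mathbf{u}=\mathbf{f}$.

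First I would check that $\mathbf{f}=(f,0)\in\mathbb{H}_0$. The second component vanishes, so the only nontrivial requirement is $\int_{\partial\Omega}f\,dx=0$. In the TAT/PAT problem the initial pressure $f$ is the datum $u_0$ of the direct problem (\ref{E:TATg}), which is taken in $H_0^1(\Omega)$ (physically, the absorbing tissue is surrounded by, but does not touch, the reflecting walls). Its trace on $\partial\Omega$ therefore vanishes, giving $\int_{\partial\Omega}f\,dx=0$ and in fact $\mathbf{f}\in\mathbb{H}_1\subset\mathbb{H}_0$.

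Next I would repeat the reasoning of Theorem~\ref{T:Neumann-ser} with $\mathbf{u}$ replaced by $\mathbf{f}$. Because $g=\Lambda_T\mathbf{f}$, so that $\Pi_0 Ag=\Pi_0 A\Lambda_T\mathbf{f}$, an induction on the recurrence (\ref{E:iterations-general}) using the invariance of $\mathbb{H}_0$ yields the identity $\mathbf{f}-\mathbf{u}^{(k)}=(I-\Pi_0 A\Lambda_T)^k\mathbf{f}$. Applying the seminorm contraction (\ref{E:seminorm0}) $k$ times gives $|\mathbf{f}-\mathbf{u}^{(k)}|\leq\delta(T)^k|\mathbf{f}|$, and since $\mathbf{f}-\mathbf{u}^{(k)}\in\mathbb{H}_0$, Corollary~\ref{C:cor} upgrades this to $\|\mathbf{f}-\mathbf{u}^{(k)}\|\leq C_P(\Omega)\,\delta(T)^k|\mathbf{f}|\leq C_P(\Omega)\,\delta(T)^k\|\mathbf{f}\|$, which is the asserted bound.

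The proof is thus essentially a corollary, and the only point that genuinely requires attention — the main, if modest, obstacle — is the $\mathbb{H}_0$-membership of $\mathbf{f}$. This is exactly where the constant functions must be excluded: a constant pair $(c,0)$ lies in the kernel of the seminorm $|\cdot|$ and is invisible to the time-reversal operator $A$, since constant data $g$ has $g_t=0$ and hence produces $v\equiv0$ in (\ref{E:Rev-p}). Were $\int_{\partial\Omega}f\,dx\neq0$, the $\Pi_0$-iterations would converge to $\Pi_0\mathbf{f}$ rather than to $\mathbf{f}$; the hypothesis $f\in H_0^1(\Omega)$ rules this out, so no separate recovery of the constant mode is needed.
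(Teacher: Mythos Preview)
Your proof is correct and follows the same route as the paper's one-line argument: reduce to the reasoning of Theorem~\ref{T:Neumann-ser} once $\mathbf{f}$ is seen to lie in the appropriate subspace. The only nuance is that the paper invokes~(\ref{E:seminorm1}) rather than~(\ref{E:seminorm0}), i.e., it works with the $\Pi_1$-iterations of~(\ref{E:final-alg}) instead of the $\Pi_0$-iterations literally cross-referenced in the theorem statement; since you already observe $\mathbf{f}\in\mathbb{H}_1\subset\mathbb{H}_0$ (via $f\in H_0^1(\Omega)$), either projector yields the stated bound.
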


\begin{proof}
The proof is almost identical to that of Theorem \ref{T:Neumann-ser}, with
inequality (\ref{E:seminorm1}) used instead of (\ref{E:seminorm0}).
\end{proof}

\begin{remark}
Similarly to Remark~\ref{myremark},
\begin{equation*}
\Vert\mathbf{f}-\Pi_{1}Ag\Vert\leq C_{1}(\Omega)C_{P}(\Omega)e^{-aT}\Vert
\mathbf{f}\Vert,
\end{equation*}
and, when $T$ is sufficiently large, $\mathbf{u}^{(1)}=\Pi_{1}Ag$ is a good
approximation to $\mathbf{f}$.

\end{remark}

\section{Numerical implementation and simulations}

\begin{figure}[t]
\begin{center}
\begin{tabular}{ccc}
\includegraphics[width=2.0in,height=2.0in]{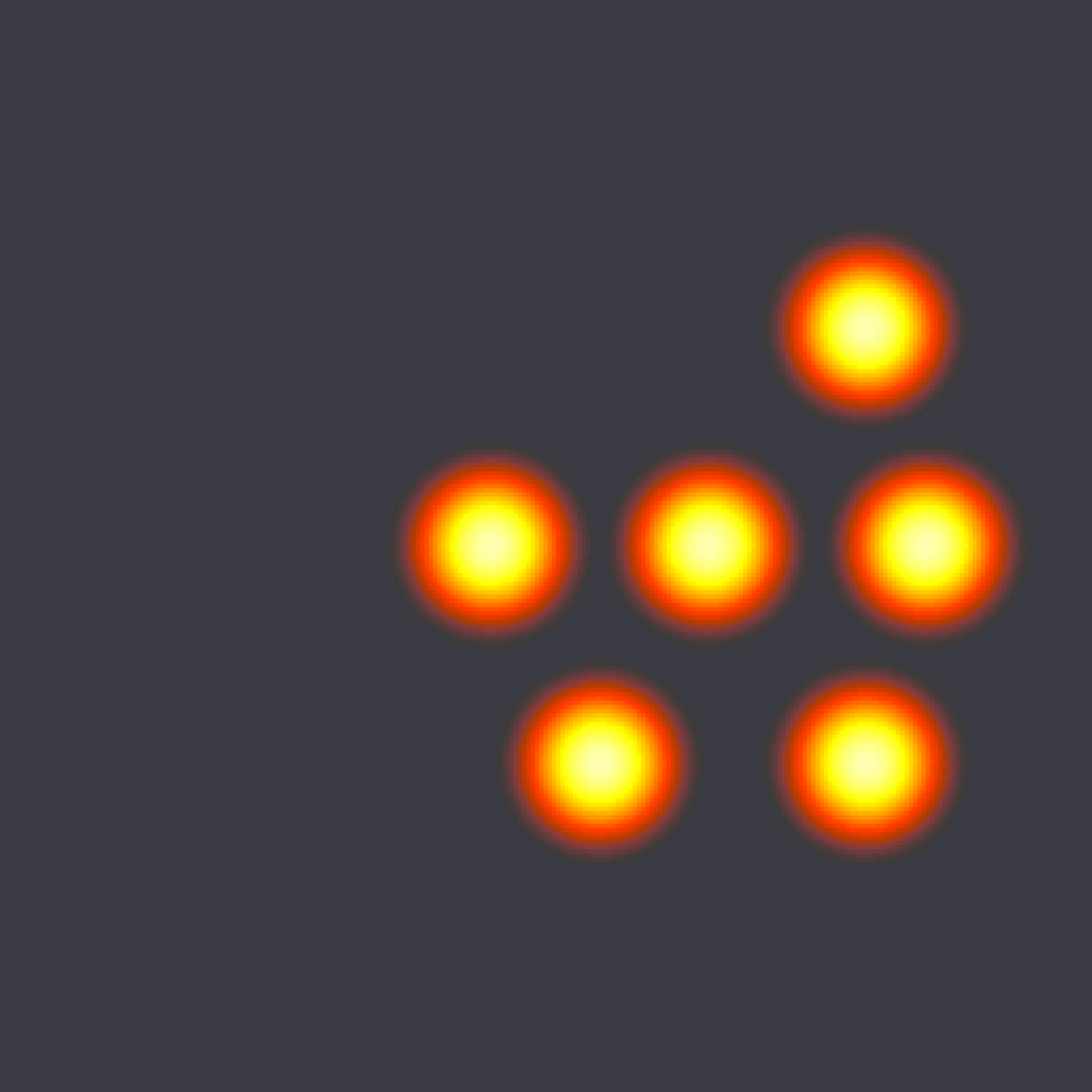} &
\includegraphics[width=2.0in,height=2.0in]{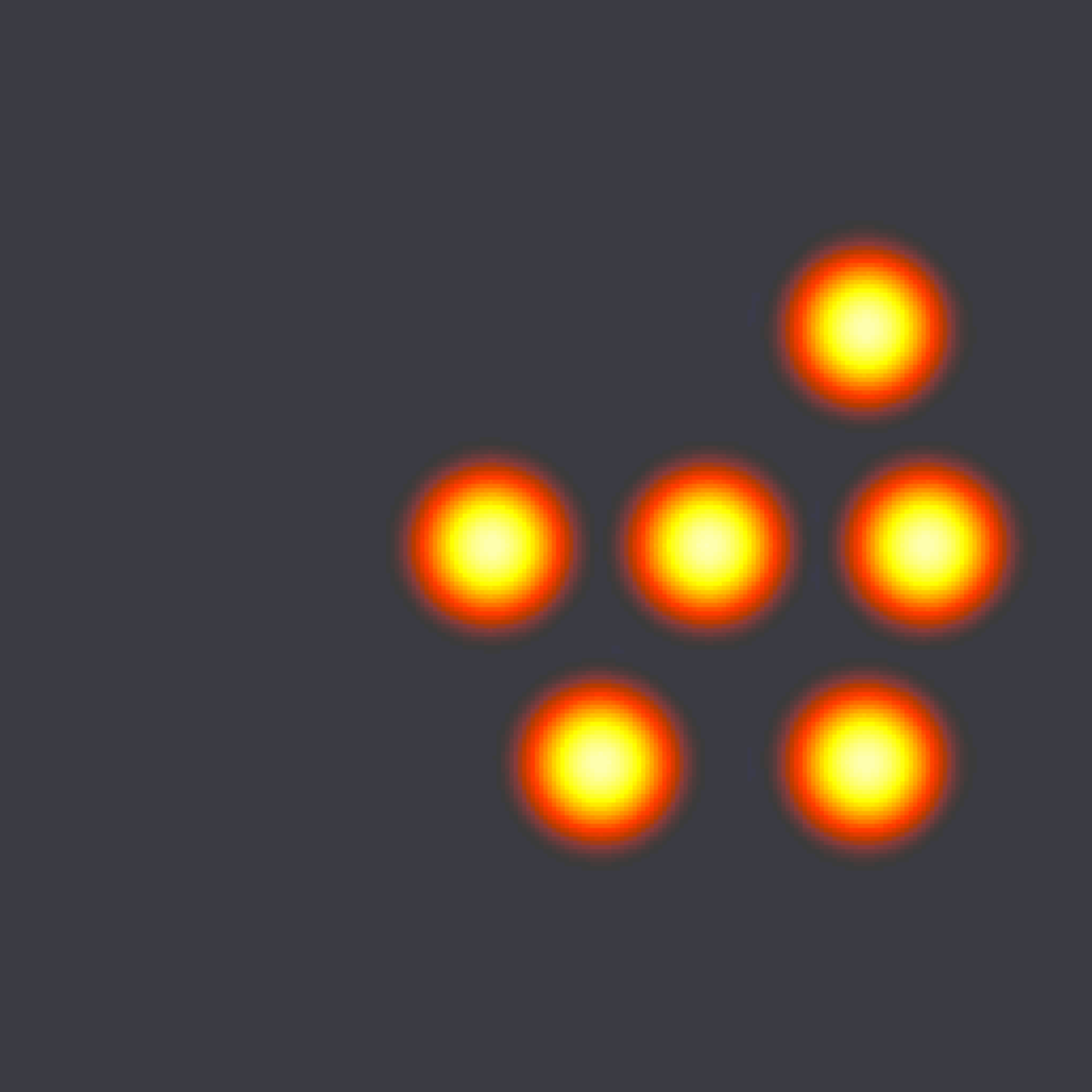} &
\includegraphics[width=2.0in,height=2.0in]{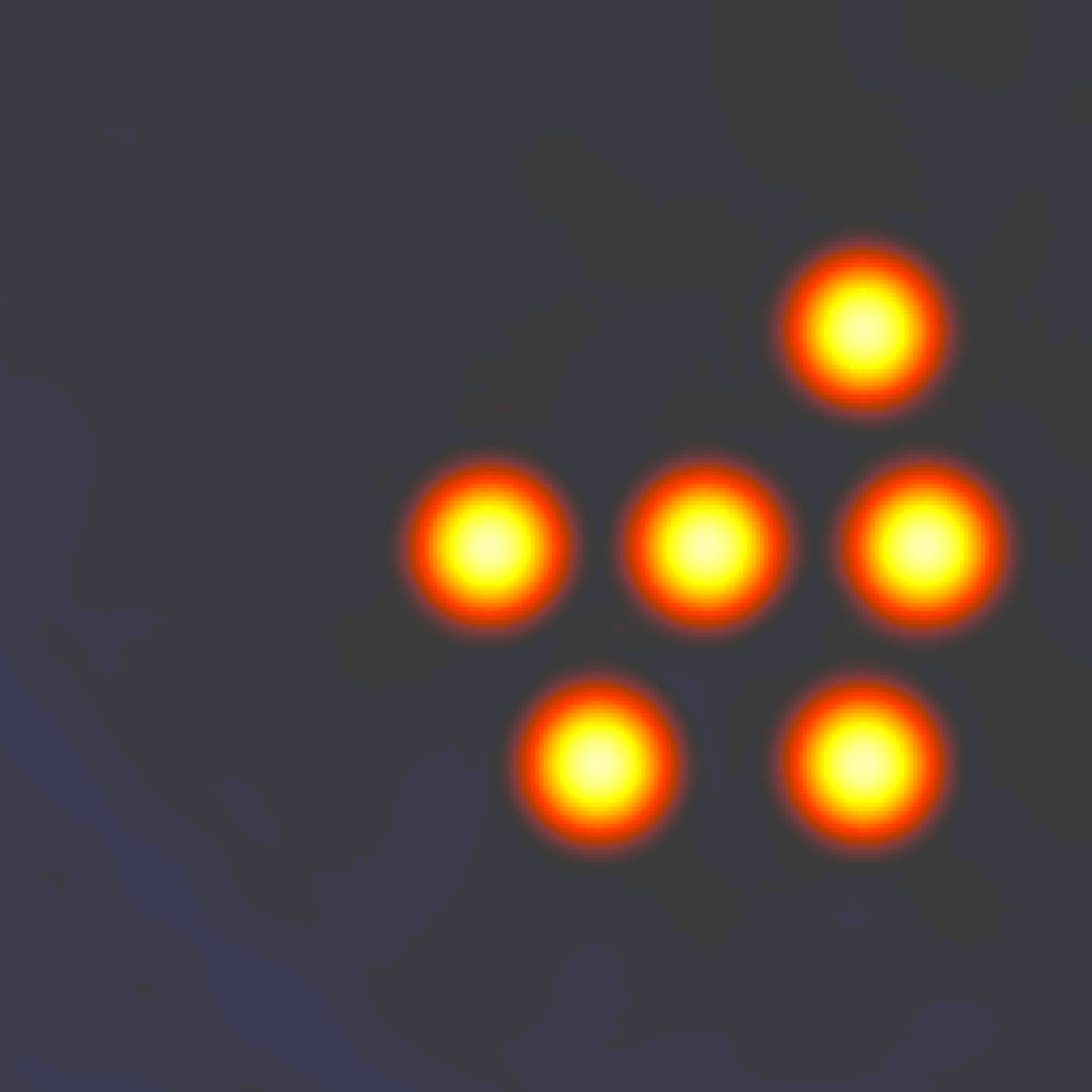} \\
(a) Phantom & (b) Full data reconstruction & (c) Partial data reconstruction
\\
\phantom{a} &  &  \\
&  &
\end{tabular}
\\[0pt]
\includegraphics[width=4.6in,height=1.5in]{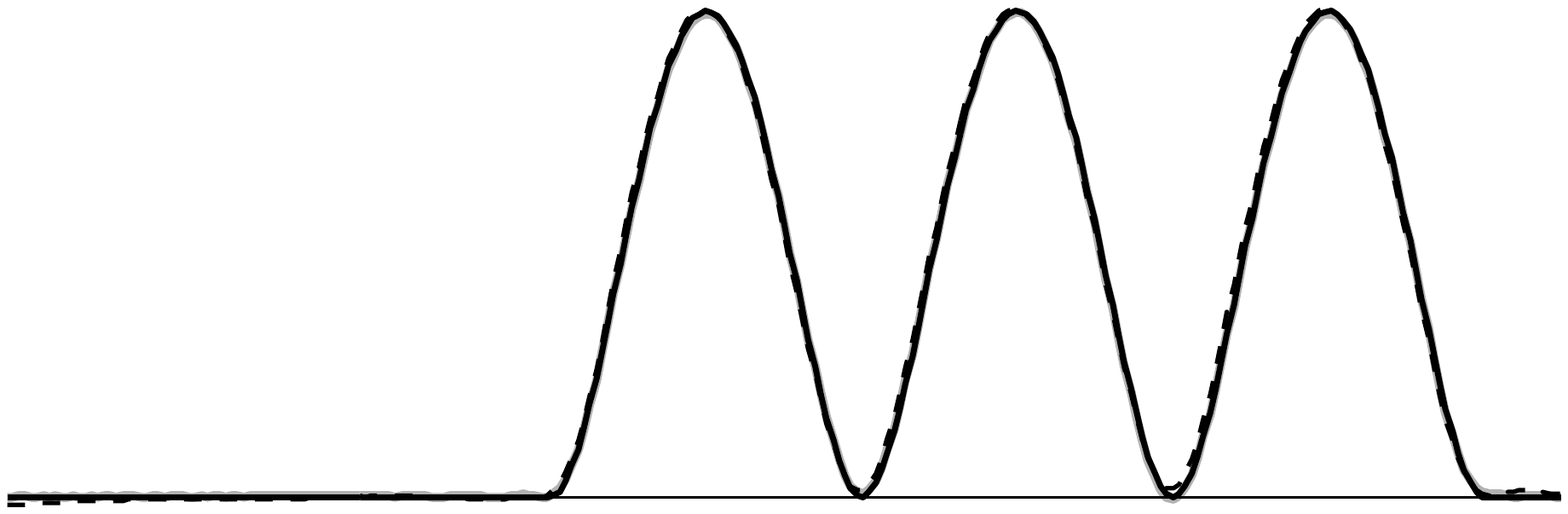}\\[0pt]
(d) Central horizontal cross-sections of (a), (b), and (c)\\[0pt]
\end{center}
\caption{Reconstruction with $T=5$. In (d): gray line represents the
phantom, dashed line shows image reconstructed from the partial data, black
line shows full data reconstruction}
\label{F:t5}
\end{figure}

\subsection{Implementation}

\begin{figure}[t]
\begin{center}
\includegraphics[width=4.6in,height=1.5in]{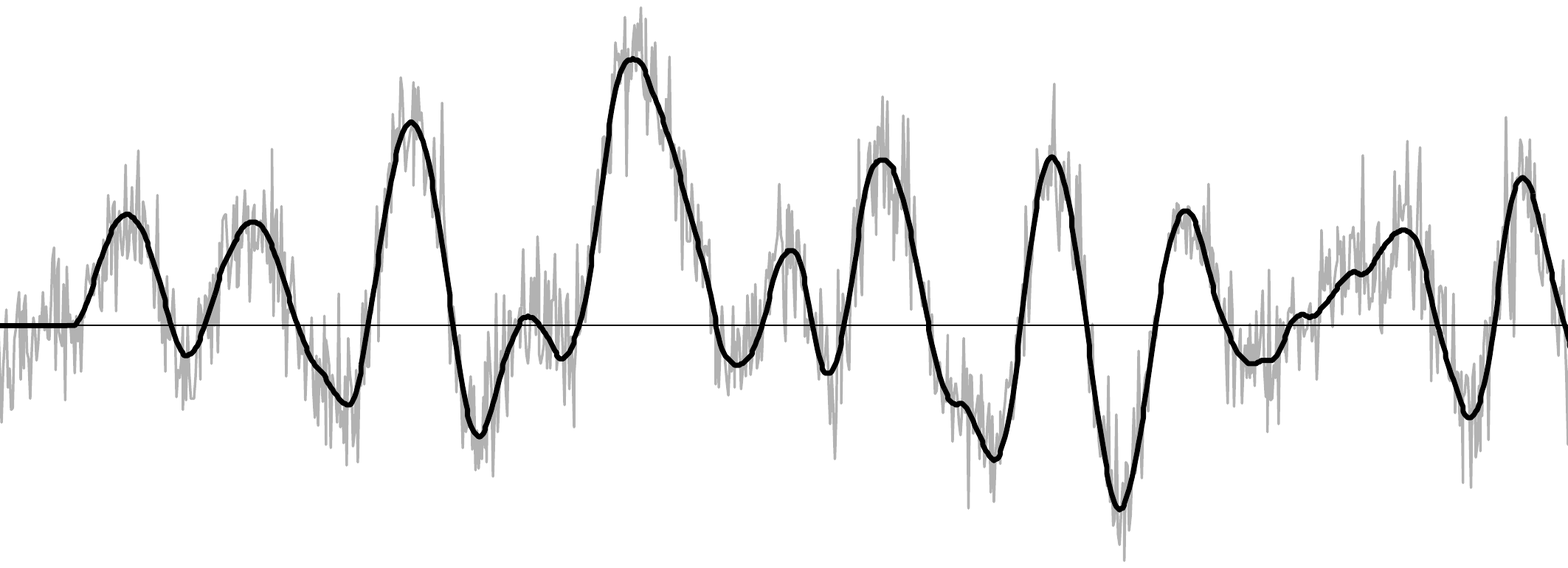}\\[0pt]
(a) Exact data $g(x,t)$ and data with added 50\% noise (in $L^{2}$ norm),
for one $x$ \\[0.5cm]
\par
\begin{tabular}{ccc}
\includegraphics[width=2.0in,height=2.0in]{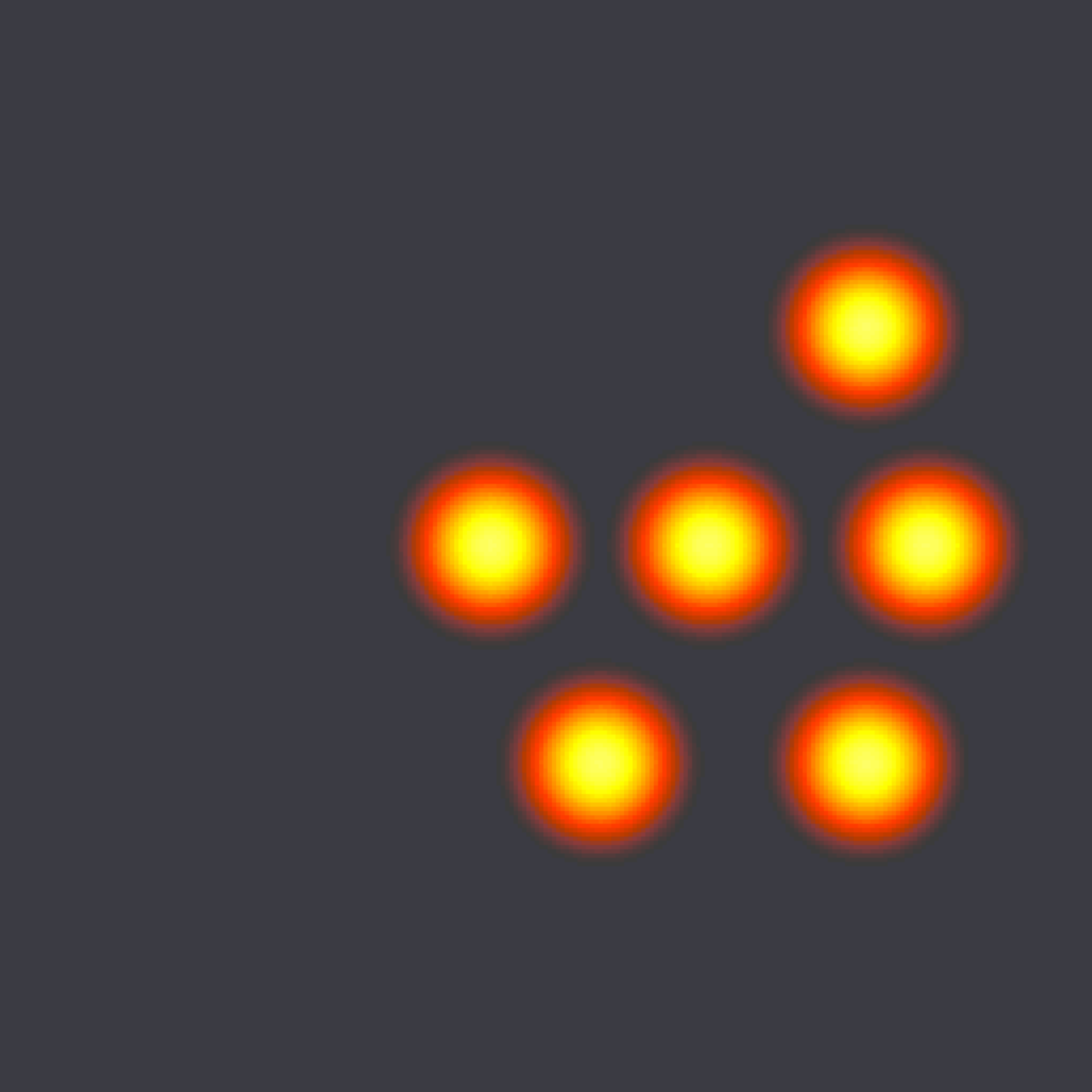} &
\includegraphics[width=2.0in,height=2.0in]{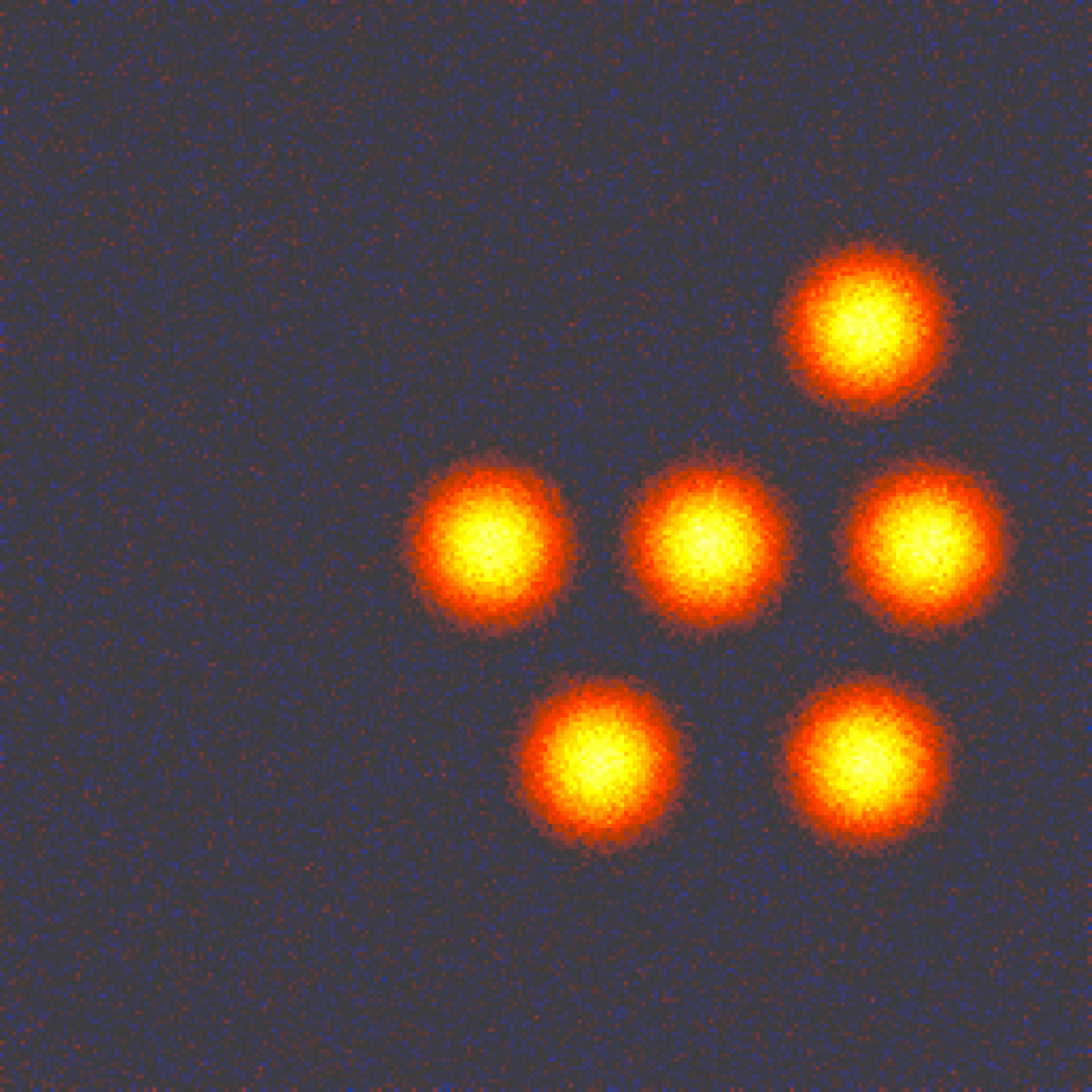} &
\includegraphics[width=2.0in,height=2.0in]{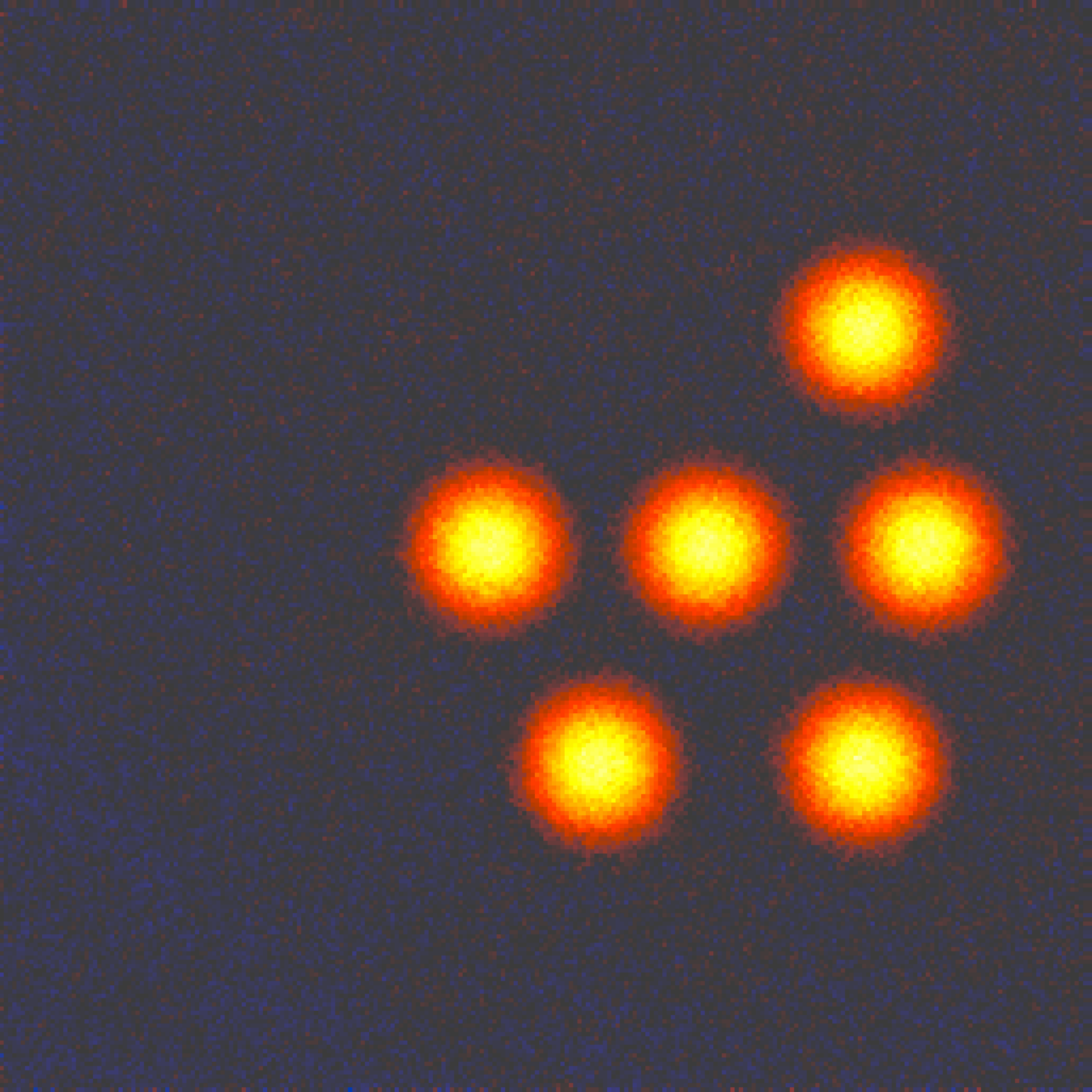} \\
(b) Phantom & (c) Full data reconstruction & (d) Partial data reconstruction
\\
\phantom{a} &  &  \\
&  &
\end{tabular}
\\[0pt]
\includegraphics[width=4.6in,height=1.5in]{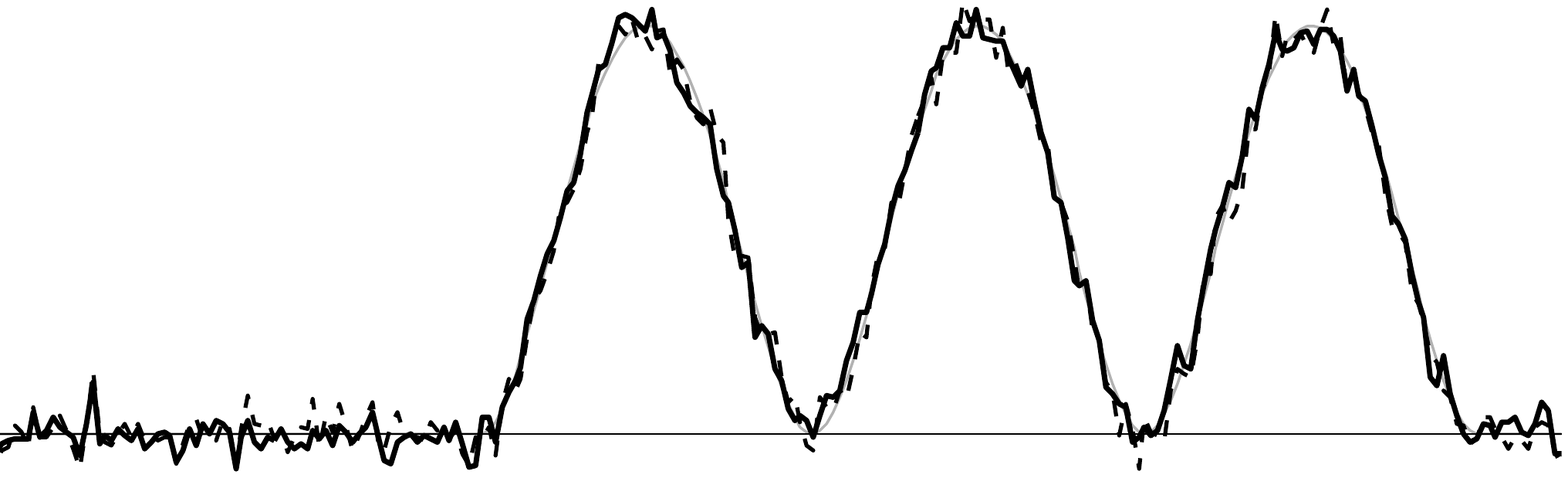}\\[0pt]
(e) Central horizontal cross-sections of (b), (c), and (d)\\[0pt]
\end{center}
\caption{Reconstruction with $T=5$ and with added 50\% noise (in $L^{2}$
norm). In (e): gray line represents the phantom, dashed line shows image
reconstructed from the partial data, black line shows full data
reconstruction}
\label{F:t5partial}
\end{figure}

One of the advantages of the present method is the ease of implementation
using standard finite differences. Unlike algorithms of \cite{Stefanov-Yang}
, our approach does not require solving the Dirichlet problem for Laplace
equation to initialize the time reversal. (The latter problem is well
studied and various methods for its solution are known. However, efficient
numerical schemes for arbitrary domains are quite sophisticated and require
noticeable effort to implement).

Our numerical realization of the algorithm is based on equation (\ref
{E:final-alg}) that requires computing operators $\Lambda_{T}$ and $A$.
These operators represent solutions of the wave equation forward and
backwards in time, respectively; they were calculated using finite
difference stencils as described below.

Our simulations were performed on a 2D square domain $[-1,1]\times
\lbrack-1,1].$ Throughout this section we will represent our 2D spatial
variable $x$ in the coordinate form, and will change the notations for all
functions correspondingly:
\begin{equation*}
x=(\mathtt{x,y}),\quad u(x,t)=u(\mathtt{x,y},t),\quad v(x,t)=v(\mathtt{x,y}
,t),\quad\text{etc.}
\end{equation*}
Our square domain was discretized using Cartesian grid of size $257\times257$
, with the step $\Delta\mathtt{x}=\Delta\mathtt{y}=2/257;$ time was
discretized uniformly with the step $\Delta t=0.5\Delta\mathtt{x}$. The
speed of sound $c(\mathtt{x,y})$ was set to 1, for simplicity. Time stepping
inside the domain in the forward direction was implemented by applying
standard second-order centered stencils in both time and space to the
discretized solution $u(\mathtt{x}_{k},\mathtt{y}_{l},t_{j})$
\begin{align*}
\frac{\partial^{2}}{\partial\mathtt{x}^{2}}u(\mathtt{x}_{k},\mathtt{y}
_{l},t_{j}) & \thickapprox\widetilde{\frac{\partial^{2}}{\partial \mathtt{x}
^{2}}u(\mathtt{x}_{k},\mathtt{y}_{l},t_{j})}\equiv\frac {u(\mathtt{x}_{k+1},
\mathtt{y}_{l},t_{j})+u(\mathtt{x}_{k-1},\mathtt{y}_{l},t_{j})-2u(\mathtt{x}
_{k},\mathtt{y}_{l},t_{j})}{\Delta\mathtt{x}^{2}}, \\
\frac{\partial^{2}}{\partial\mathtt{y}^{2}}u(\mathtt{x}_{k},\mathtt{y}
_{l},t_{j}) & \thickapprox\widetilde{\frac{\partial^{2}}{\partial \mathtt{y}
^{2}}u(\mathtt{x}_{k},\mathtt{y}_{l},t_{j})}\equiv\frac {u(\mathtt{x}_{k},
\mathtt{y}_{l+1},t_{j})+u(\mathtt{x}_{k},\mathtt{y}_{l-1},t_{j})-2u(\mathtt{x
}_{k},\mathtt{y}_{l},t_{j})}{\Delta\mathtt{x}^{2}}, \\
\frac{\partial^{2}}{\partial t^{2}}u(\mathtt{x}_{k},\mathtt{y}_{l},t_{j}) &
\thickapprox\widetilde{\frac{\partial^{2}}{\partial t^{2}}u(\mathtt{x}_{k},
\mathtt{y}_{l},t_{j})}\equiv\frac{u(\mathtt{x}_{k},\mathtt{y}_{l},t_{j+1})+u(
\mathtt{x}_{k},\mathtt{y}_{l},t_{j-1})-2u(\mathtt{x}_{k},\mathtt{y}
_{l},t_{j})}{\Delta t^{2}},
\end{align*}
resulting in the formula
\begin{equation*}
u(\mathtt{x}_{k},\mathtt{y}_{l},t_{j+1})=2u(\mathtt{x}_{k},\mathtt{y}
_{l},t_{j})-u(\mathtt{x}_{k},\mathtt{y}_{l},t_{j-1})+\Delta t^{2}\left(
\widetilde{\frac{\partial^{2}}{\partial\mathtt{x}^{2}}u(\mathtt{x}_{k},
\mathtt{y}_{l},t_{j})}+\widetilde{\frac{\partial^{2}}{\partial \mathtt{y}^{2}
}u(\mathtt{x}_{k},\mathtt{y}_{l},t_{j})}\right) ,
\end{equation*}
where tilde denotes approximate quantities. Time stepping backwards in time
(when computing $A)$ were done similarly
\begin{equation}
v(\mathtt{x}_{k},\mathtt{y}_{l},t_{j-1})=2v(\mathtt{x}_{k},\mathtt{y}
_{l},t_{j})-v(\mathtt{x}_{k},\mathtt{y}_{l},t_{j+1})+\Delta t^{2}\left(
\widetilde{\frac{\partial^{2}}{\partial\mathtt{x}^{2}}v(\mathtt{x}_{k},
\mathtt{y}_{l},t_{j})}+\widetilde{\frac{\partial^{2}}{\partial \mathtt{y}^{2}
}v(\mathtt{x}_{k},\mathtt{y}_{l},t_{j})}\right) .   \label{E:inside}
\end{equation}

When computing action of the operator $\Lambda_{T}$ (forward problem), the
Neumann boundary condition was represented by the simplest first-order
two-point stencil; this results in the values at the boundary points being
set to the values of the nearest grid points.

The discretization of the non-standard boundary condition
\begin{equation}
\frac{\partial}{\partial\nu}v(x,t)-\lambda(x)\,\frac{\partial}{\partial t}
v(x,t)=-\lambda(x)\,\frac{\partial}{\partial t}g(x,t),
\label{E:smartboundary}
\end{equation}
arising in problem (\ref{E:Rev-p}) was performed as follows. The simplest
first-order two-point forward stencils were used to approximate all the
derivatives; for example $\frac{\partial}{\partial t}v$ was approximated at $
t=t_{j-1}$, \ $\mathtt{x}=\mathtt{x}_{0}$\ by
\begin{equation*}
\frac{\partial}{\partial t}v(\mathtt{x}_{0},\mathtt{y}_{l},t_{j-1})
\thickapprox\widetilde{\frac{\partial}{\partial t}v(\mathtt{x}_{0},\mathtt{y}
_{l},t_{j-1})}\equiv\frac{v(\mathtt{x}_{0},\mathtt{y}_{l},t_{j})-v(\mathtt{x}
_{0},\mathtt{y}_{l},t_{j-1})}{\Delta t},
\end{equation*}
and $\frac{\partial}{\partial t}g$ was computed similarly. The normal
derivative was also approximated by the simplest two-point stencil applied
to values at time $t_{j-1}$ ; for example, one the side with $\mathtt{x}=
\mathtt{x}_{0}$ the following formula was used:
\begin{equation*}
\frac{\partial}{\partial\nu}v(\mathtt{x}_{0},\mathtt{y}_{l},t_{j-1})=-\frac{
\partial}{\partial\mathtt{x}}v(\mathtt{x}_{0},\mathtt{y}_{l},t_{j-1})
\thickapprox-\widetilde{\frac{\partial}{\partial\mathtt{x}}v(\mathtt{x}_{0},
\mathtt{y}_{l},t_{j-1})}\equiv-\frac{v(\mathtt{x}_{1},\mathtt{y}
_{l},t_{j-1})-v(\mathtt{x}_{0},\mathtt{y}_{l},t_{j-1})}{\Delta\mathtt{x}}.
\end{equation*}
Substituting the last two equations into (\ref{E:smartboundary}) resulted in
\begin{equation*}
\frac{v(\mathtt{x}_{0},\mathtt{y}_{l},t_{j-1})-v(\mathtt{x}_{1},\mathtt{y}
_{l},t_{j-1})}{\Delta\mathtt{x}}=\frac{\lambda(\mathtt{x}_{0},\mathtt{y}_{l})
}{\Delta t}\left[ v(\mathtt{x}_{0},\mathtt{y}_{l},t_{j})-v(\mathtt{x}_{0},
\mathtt{y}_{l},t_{j-1})-g(\mathtt{x}_{0},\mathtt{y}_{l},t_{j})+g(\mathtt{x}
_{0},\mathtt{y}_{l},t_{j-1})\right] \,
\end{equation*}
or
\begin{equation}
v(\mathtt{x}_{0},\mathtt{y}_{l},t_{j-1}) = v(\mathtt{x}_{1},\mathtt{y}
_{l},t_{j-1})+\gamma(\mathtt{x}_{0},\mathtt{y}_{l})\left[ v(\mathtt{x}_{0},
\mathtt{y}_{l},t_{j})-v(\mathtt{x}_{0},\mathtt{y}_{l},t_{j-1})-g(\mathtt{x}
_{0},\mathtt{y}_{l},t_{j})+g(\mathtt{x}_{0},\mathtt{y}_{l},t_{j-1})\right] ,
\label{E:smart1}
\end{equation}
where
\[
\gamma(\mathtt{x}_{0},\mathtt{y}_{l})  \equiv\frac{\lambda(
\mathtt{x}_{0},\mathtt{y}_{l})\Delta\mathtt{x}}{\Delta t}.  \notag
\]
Solving (\ref{E:smart1}) for $v(\mathtt{x}_{0},\mathtt{y}_{l},t_{j-1})$
yielded
\begin{equation}
v(\mathtt{x}_{0},\mathtt{y}_{l},t_{j-1})=\frac{v(\mathtt{x}_{1},\mathtt{y}
_{l},t_{j-1})}{1+\gamma(\mathtt{x}_{0},\mathtt{y}_{l})}+\frac{\gamma(\mathtt{
x}_{0},\mathtt{y}_{l}) \left[v(\mathtt{x}_{0},\mathtt{y}_{l},t_{j})-g(
\mathtt{x}_{0},\mathtt{y}_{l},t_{j})+g(\mathtt{x}_{0},\mathtt{y}_{l},t_{j-1})
\right]}{1+\gamma(\mathtt{x}_{0},\mathtt{y}_{l})}.   \label{E:smart2}
\end{equation}
Approximation of boundary condition (\ref{E:smartboundary}) on other parts
of the boundary was done similarly. In order to apply (\ref{E:smart2}) (and
similar expression on the other parts of the boundary), one first applies
(\ref{E:inside}) at all discretization points inside of the computational
domain. Then (\ref{E:smart2}) is fully defined.

In the absence of experimentally measured data, in order to validate the
reconstruction algorithm one needs to simulate values of $g(x,t)$ on $\Gamma.
$ One could use the finite difference algorithm described above to
approximately compute $g(x,t)$ for a chosen phantom $\mathbf{f}$ $=(f,0)$.
However, doing so would constitute the so-called \textquotedblleft inverse
crime\textquotedblright: sometimes simulations will produce inordinately
good reconstructions due to the spurious cancellation of errors if the
forward and direct problems are solved using the same discretization
techniques. Thus, in order to compute $g$ we utilized the following method
based on separation of variables. Function $f$ was expanded in the
orthogonal series of eigenfunctions $\varphi_{k,l}$ of the Neumann Laplacian
on our square domain
\begin{align*}
f(\mathtt{x,y}) & =\sum_{k,l}c_{k,l}\varphi_{k,l}(\mathtt{x,y}), \\
\varphi_{k,l}\mathtt{(x,y}) & =\cos(k\mathtt{\bar{x}})\cos(l\mathtt{\bar{y}}
),\quad k=0,1,2,...,\quad l=0,1,2,..., \\
\mathtt{\bar{x}} & =\pi(\mathtt{x}+1)/2,\quad\mathtt{\bar{y}}=\pi (\mathtt{y}
+1)/2.
\end{align*}
This was done efficiently using the 2D Fast Cosine Fourier transform
algorithm (FCT). Then, solution of the forward problem was computed as the
series
\begin{equation*}
u(\mathtt{x,y},t)=\sum_{k,l}c_{k,l}\varphi_{k,l}(\mathtt{x,y})\cos
(\lambda_{k,l}t),\quad\lambda_{k,l}=\frac{\pi}{2}\sqrt{k^{2}+l^{2}},\quad
k,l=0,1,2,....
\end{equation*}
For each value of $t,$ the above 2D cosine series were also summed using the
FCT. The resulting algorithm is quite fast, and, more importantly, it is
spectrally accurate with respect to $f.$ If $f$ has high degree of
smoothness, this series solution yields much higher accuracy than the finite
difference techniques we utilized as parts of the reconstruction algorithm.

\subsection{Simulations}

\begin{figure}[t]
\begin{center}
\begin{tabular}{cc}
\includegraphics[width=2.0in,height=2.0in]{pics/t3_phantom.pdf} &
\includegraphics[width=2.0in,height=2.0in]{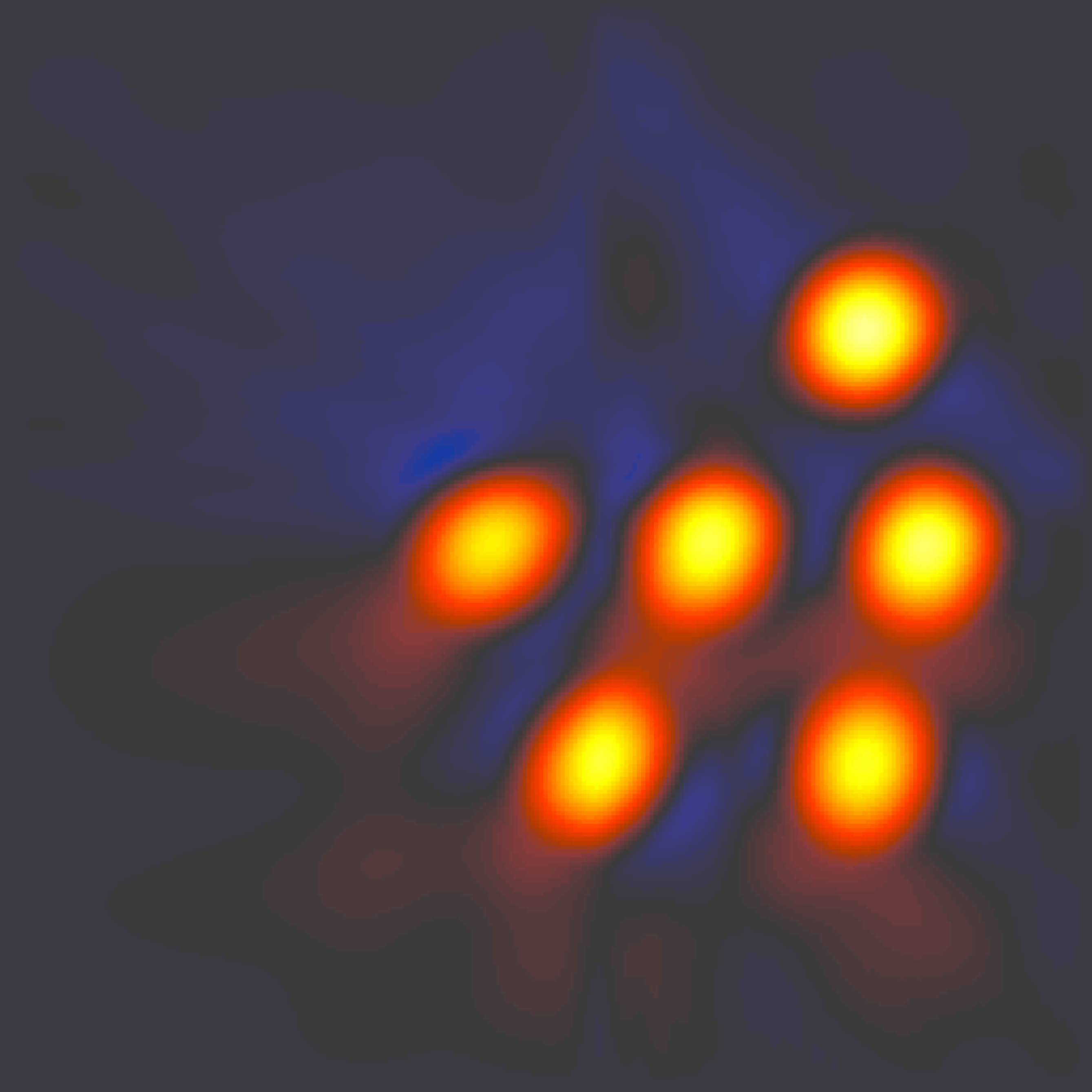} \\
(a) Phantom & (b) Initial approximation $\Pi_{1}Ag$ \\
&  \\
\includegraphics[width=2.0in,height=2.0in]{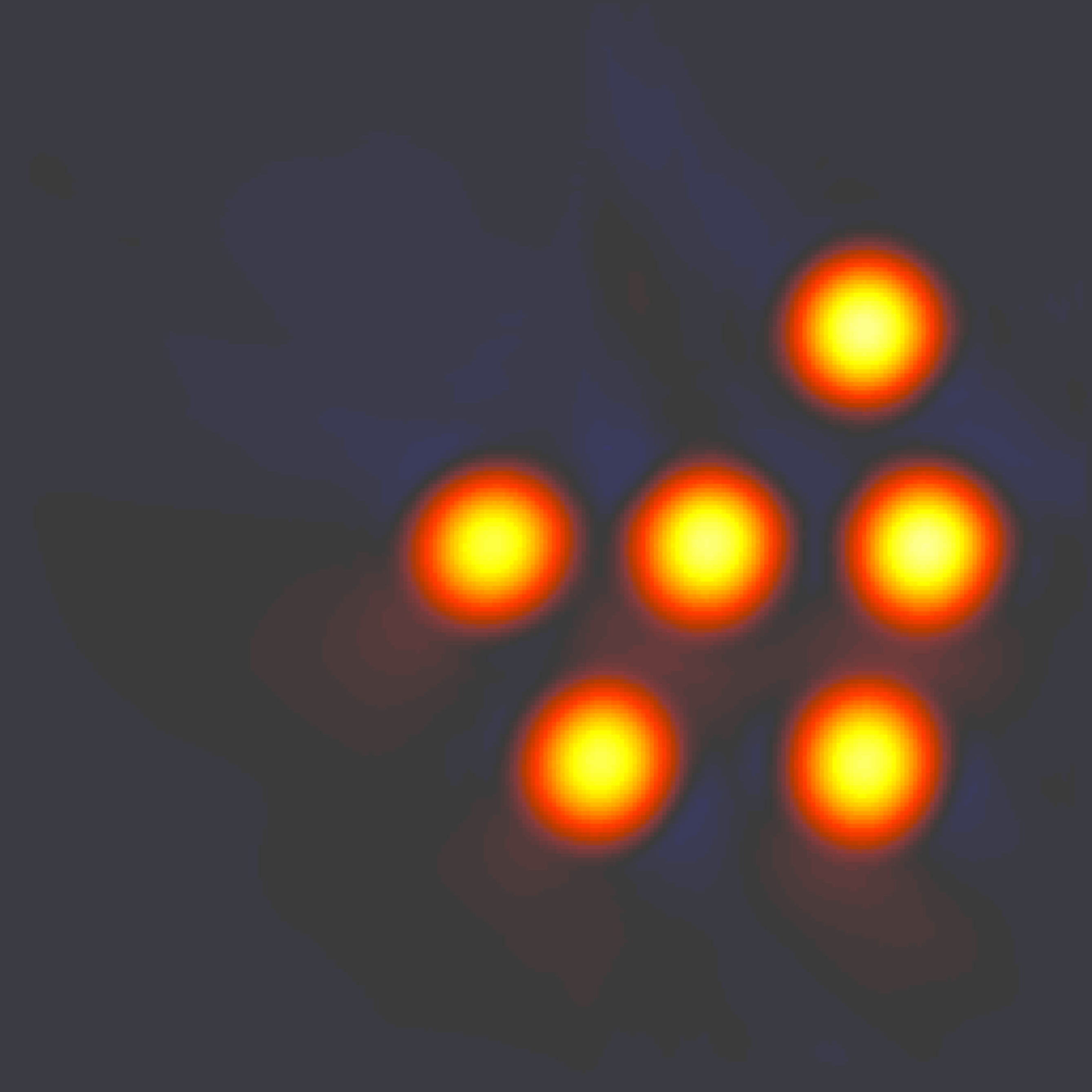} &
\includegraphics[width=2.0in,height=2.0in]{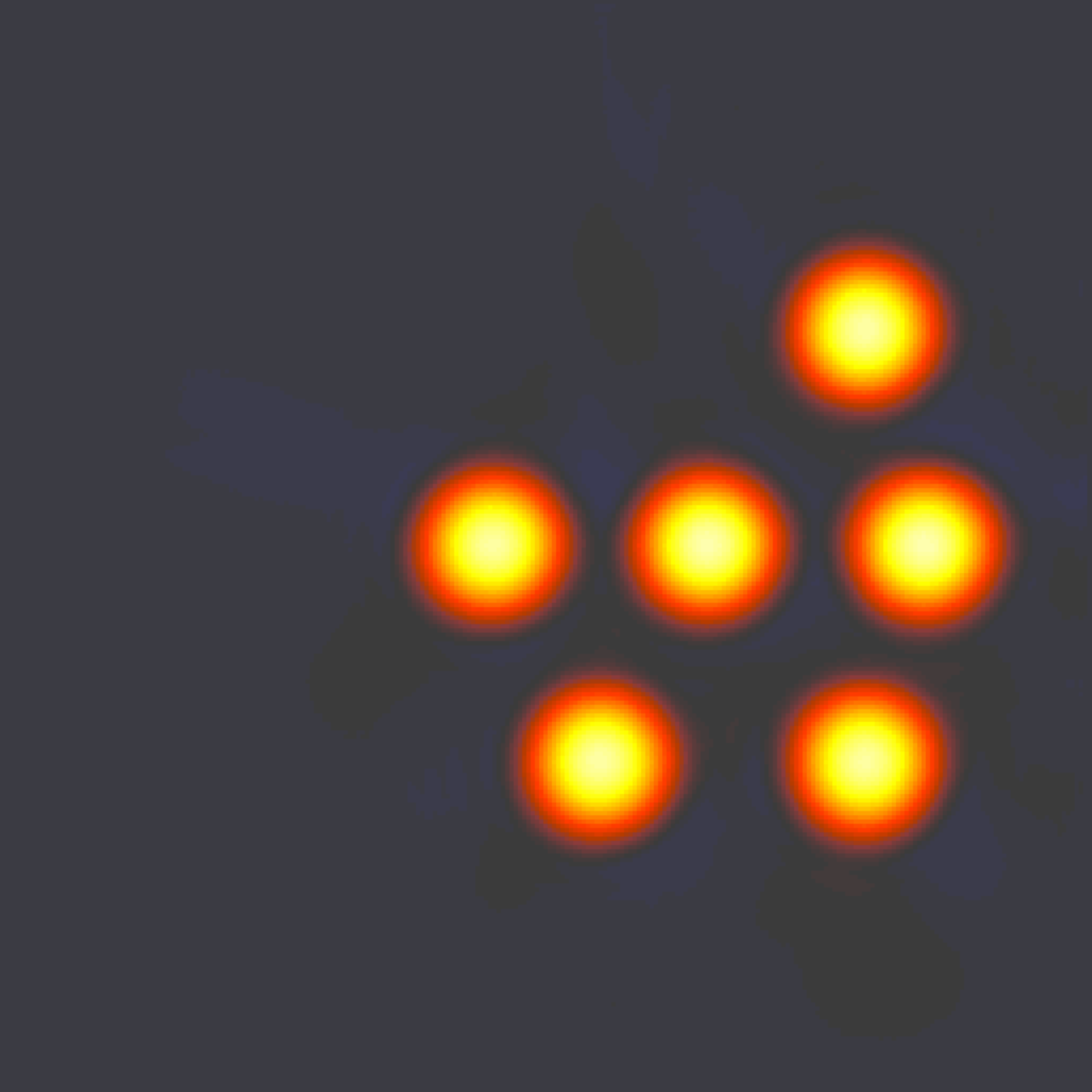} \\
(c) Iteration \#2, $\mathbf{u}^{(2)}$ & (d) Iteration \#5, $\mathbf{u}^{(5)}$
\\
&  \\
&
\end{tabular}
\\[0pt]
\includegraphics[width=4.6in,height=1.5in]{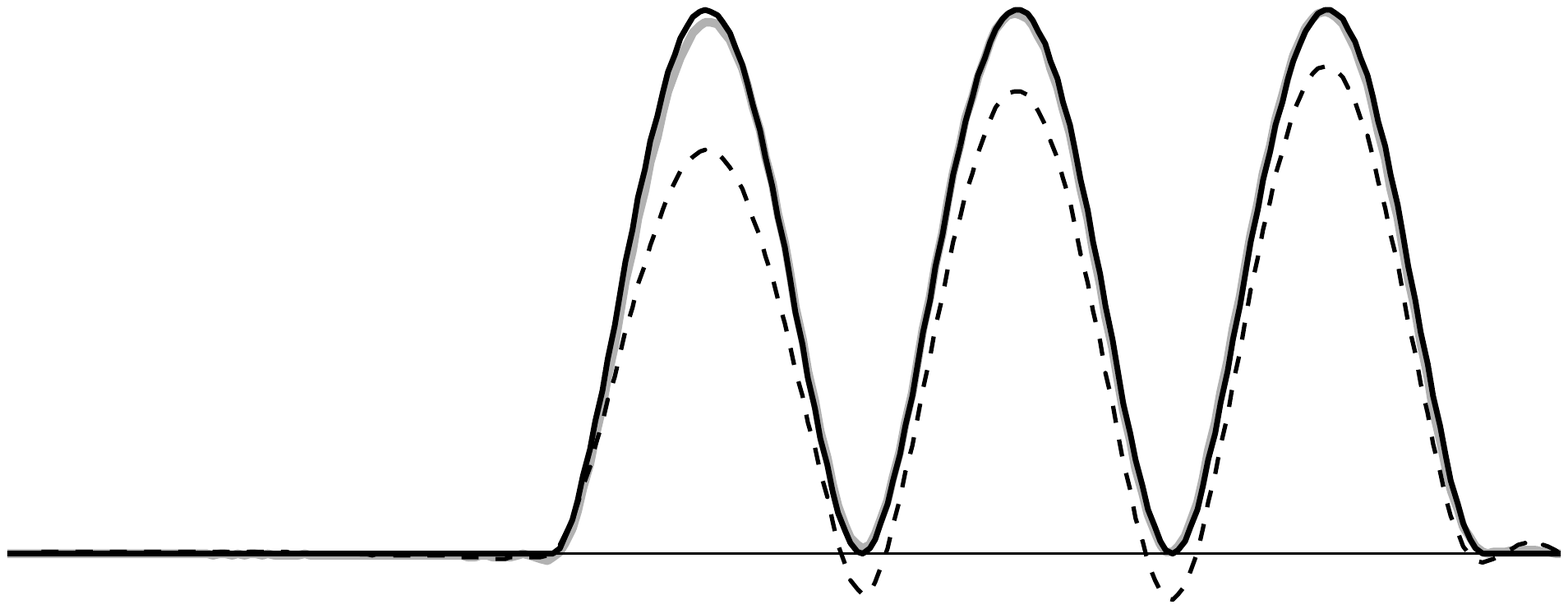}\\[0pt]
(e) Central horizontal cross-sections of (a), (b), and (d)
\end{center}
\caption{Reconstruction from the full boundary data, $T=1.6$. In (e): gray
line is the phantom, dashed line shows the initial approximation, black line
presents iteration \#5}
\label{F:fulldata}
\end{figure}

\begin{figure}[t]
\begin{center}
\begin{tabular}{cc}
\includegraphics[width=2.0in,height=2.0in]{pics/t3_phantom.pdf} &
\includegraphics[width=2.0in,height=2.0in]{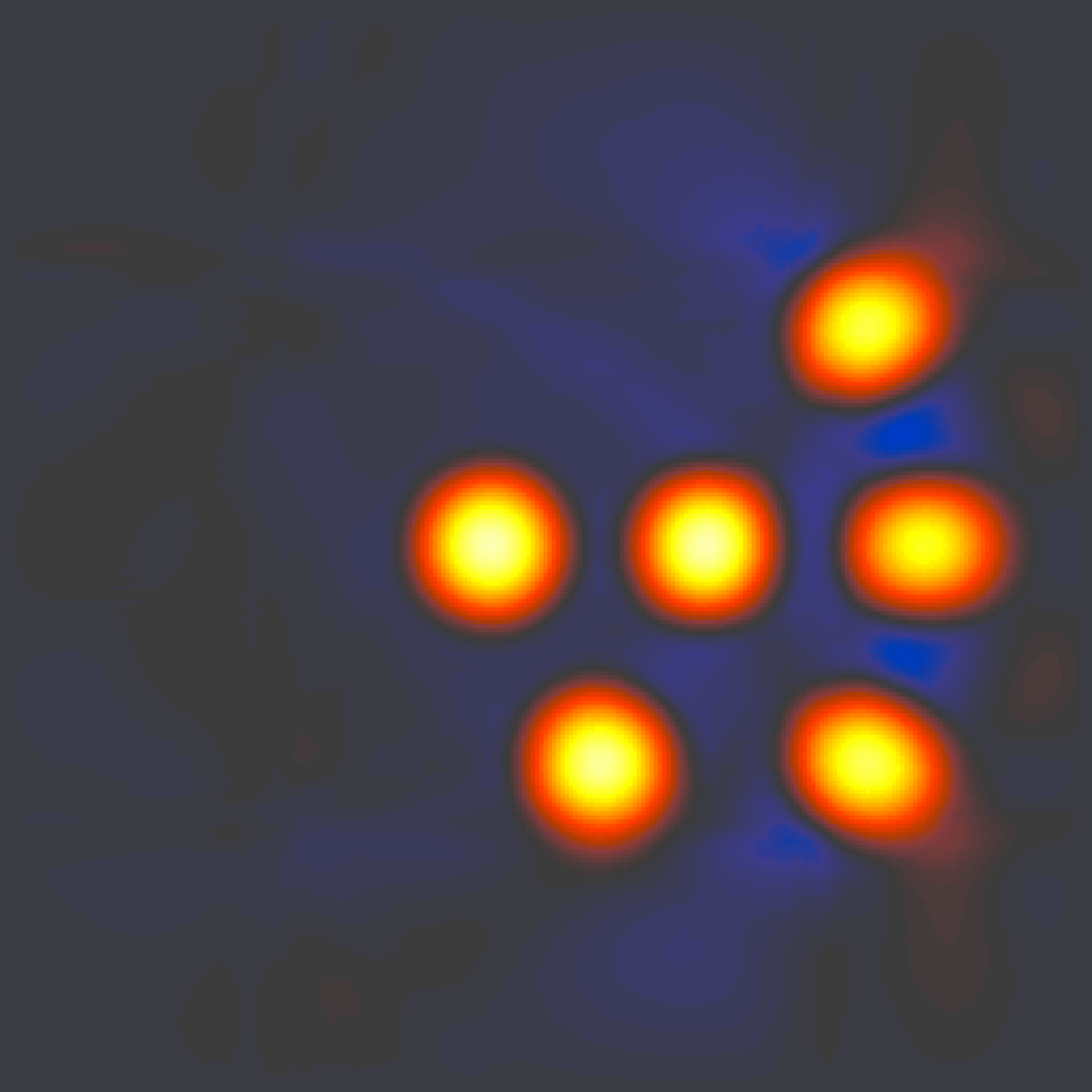} \\
(a) Phantom & (b) Initial approximation $\Pi_{1}Ag$ \\
&  \\
\includegraphics[width=2.0in,height=2.0in]{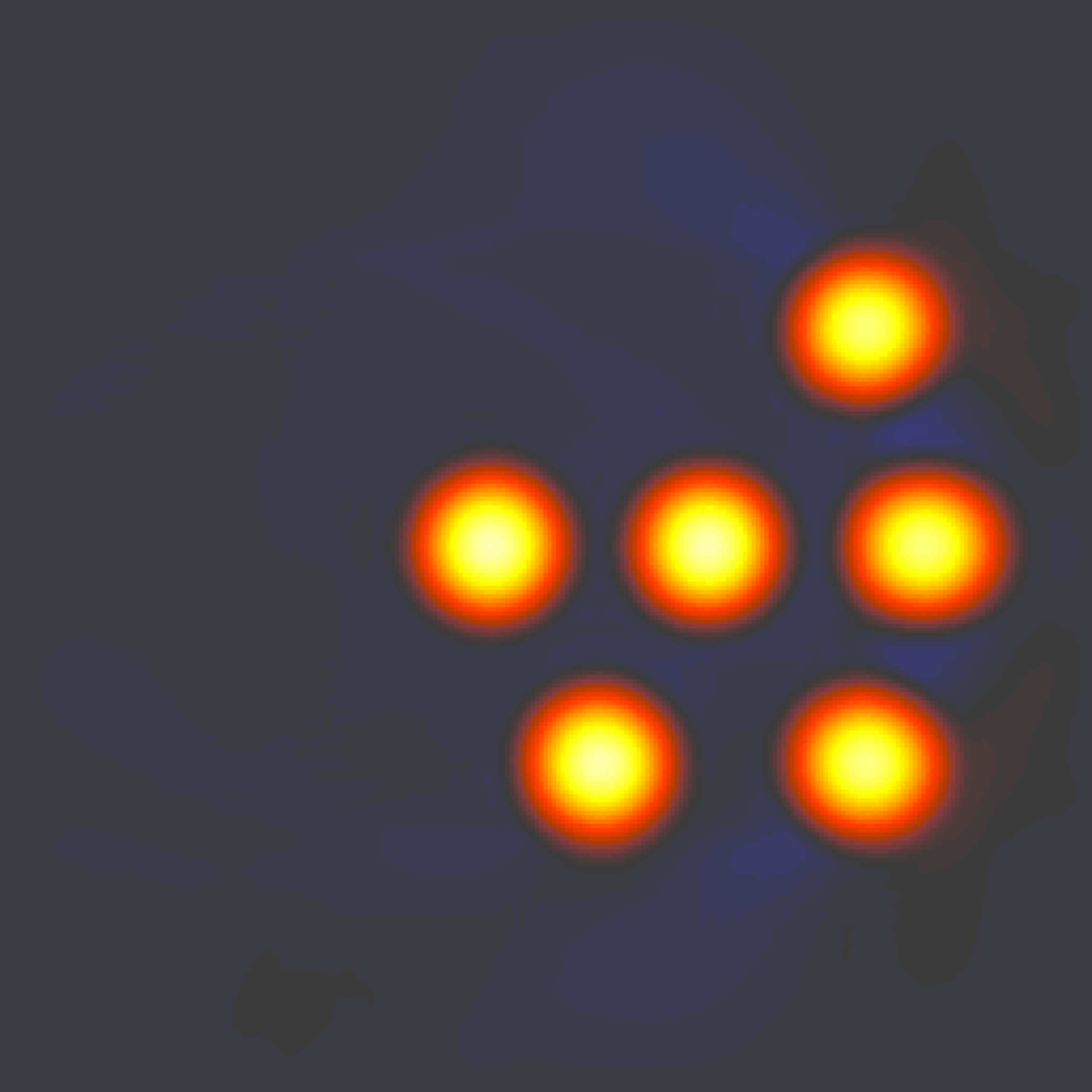} &
\includegraphics[width=2.0in,height=2.0in]{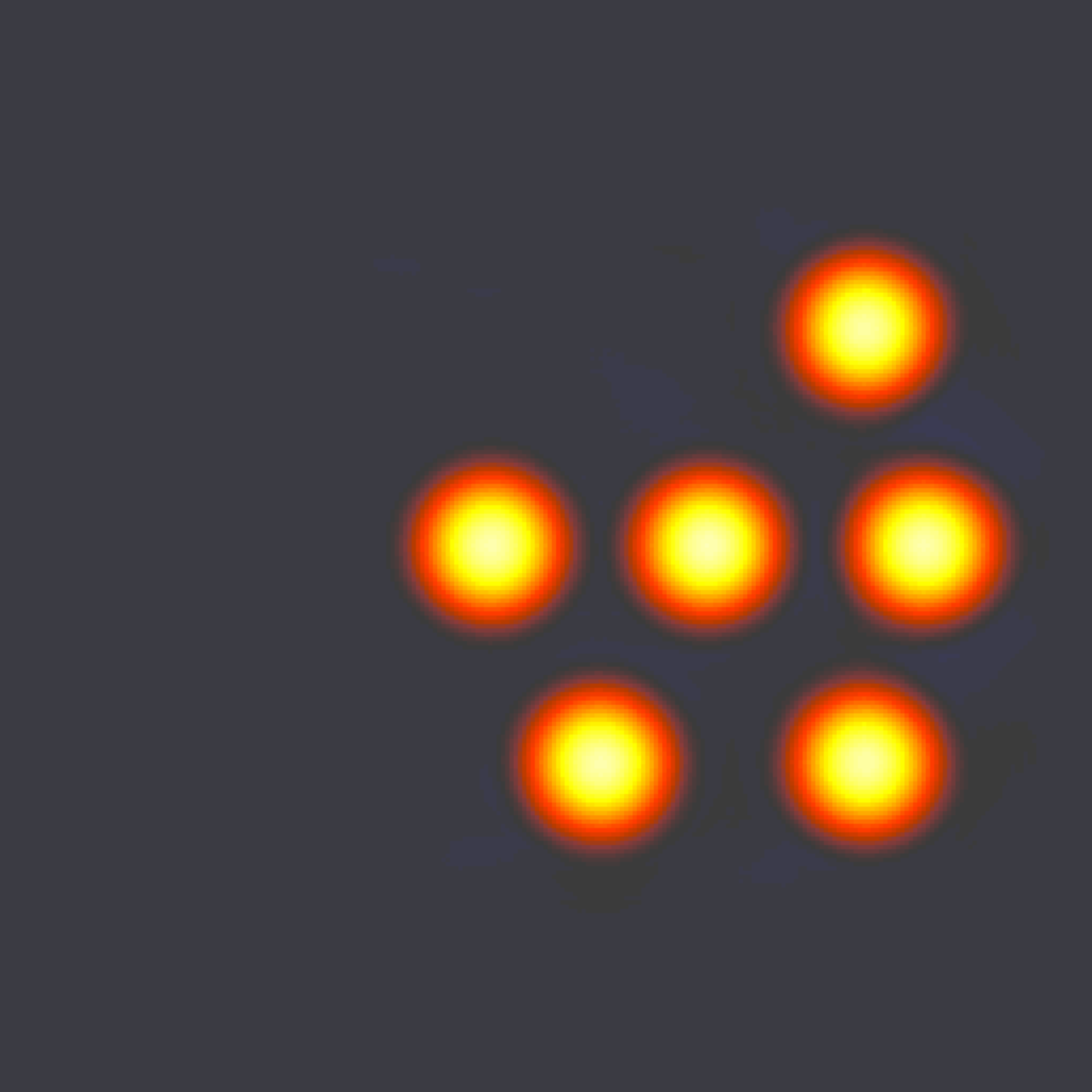} \\
(c) Iteration \#2, $\mathbf{u}^{(2)}$ & (d) Iteration \#5, $\mathbf{u}^{(5)}$
\\
&  \\
&
\end{tabular}
\\[0pt]
\includegraphics[width=4.6in,height=1.5in]{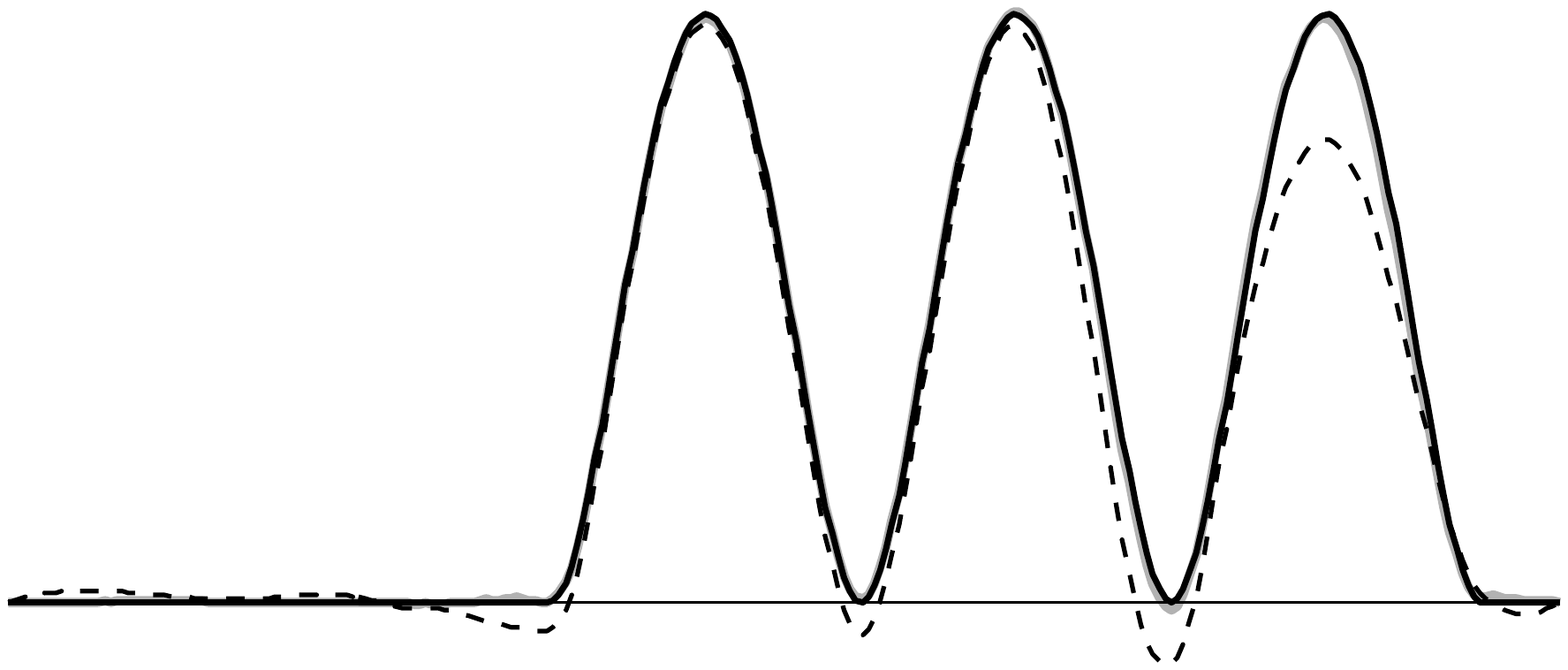}\\[0pt]
(e) Central horizontal cross-sections of (a), (b), and (d)
\end{center}
\caption{Reconstruction from the data given at the left and bottom sides of
the square domain, $T=3$. In (e): gray line is the phantom, dashed line
represents the initial approximation, black line shows iteration \#5}
\label{F:partialdata}
\end{figure}

We conducted several numerical experiments to verify theoretical conclusions
of previous sections. Two acquisition schemes were considered: a full data
scheme with $g(x,t)$ given on all sides of the square domain, and a partial
data scheme with $g$ known only on the left and bottom sides of the square.
Since we assumed $c(x)\equiv1,$ $T(\Omega,\Gamma)$ equals $2\sqrt{2}$ in the
former case (i.e., the length of the diagonal of the square) and $4\sqrt{2}$
in the latter case. Experimentally, we found that these times are too
pessimistic, and a half of that time is quite enough for the reconstruction.
This implies that our theoretical results may be not sharp. Although we were
not able to improve these theoretical estimates, we present below
simulations with the measurement times significantly reduced compared to $
T(\Omega,\Gamma)$.

As a phantom, we utilized a sum of six shifted finitely supported $
C^{1}(\Omega)$ functions of the radial variable, shown as a color-scale
image in Figure~1(a). Such a smooth phantom reduces errors related to finite
difference computations and allows us to concentrate on convergence of the
algorithm per se.

The goal of our first two simulations was to see how well the initial
approximation $\Pi_{1}Ag$ can approximate $f.$ To this end the measurement
time $T$ was chosen to equal to 5; this corresponds to the time of two and a
half bounces of waves between the opposite sides of the domain. \
Reconstruction from full data is shown in Figure~1(b) and partial data
reconstruction is presented in Figure~1(c). Image in Figure~1(d)
demonstrates profiles of the central horizontal cross sections of the three
previous images, with the phantom represented by a gray line, full time
reconstruction shown as a black line, and partial data image drawn with a
dashed line. The full data image is practically perfect; the corresponding
black line in Figure~1(d) lying on top of the gray line, rendering the
latter almost invisible. The relative $L^{2}(\Omega)$ reconstruction error
equals 1.1\% \ in the image of Figure~1(b). The partial data reconstruction
is just slightly less accurate, with the relative $L^{2}(\Omega)$ error
equal to 6.8\%. In any reconstruction from real data such error would be
negligible compared to errors introduced by imperfections of real data.

In order to illustrate the low sensitivity of the algorithm to the noise in
the data, we repeated the previous simulation with the data contaminated by
50\% white noise (in the relative $L^{2}$ norm). As in the first simulation,
only the initial guess $\Pi_{1}Ag$ was computed, without the successive
iterative refinement. Figure~2(a) shows the time series representing $g(x,t)$
for one of the points $x,$ with and without added noise. Figure~2(b)-(d)
demonstrate the same phantom, and the full and partial data reconstructions,
respectively. The errors in the two reconstructions were of about the same
order, 19\% and 22\% in the relative $L^{2}(\Omega)$ norm, with the partial
data giving, for some reason, slightly better result in this norm. The
central cross sections of these images are shown in Figure~2(e).

Our remaining two simulations were intended to verify the convergence of the
algorithm in the case when measurement time $T$ is close to a half of $
T(\Omega,\Gamma)$.  Figures~3(a)-(e) demonstrate results of a full data
reconstruction with $T$ equal to 1.6 (compare to $T(\Omega,\Gamma)=2\sqrt{2}
\thickapprox 2.828).$ Figures~3(a)-(d) show the phantom, the first
approximation $\Pi _{1}Ag$, and the second and the fifth iterations
$(\mathbf{u}^{(2)}$ and $\mathbf{u}^{(5)})$, respectively. Figure~3(e)
presents central horizontal cross-sections of the phantom, the first
approximation $\Pi_{1}Ag$, and of the fifth iterations. One can notice that,
while the initial approximation had been noticeably distorted, the fifth
iteration yields a close approximation to $f(x).$ The relative $L^{2}(\Omega)
$ norm of the error in $\mathbf{u}^{(5)}$ was $3.3\%.$

The final series of images in Figure~4 demonstrates the results of the
reconstruction from the partial data, with $T$ equal to $3$ (compare to $
T(\Omega,\Gamma)=4\sqrt{2}\thickapprox5.6569).$ As before, the phantom, the
initial approximation, and the second and the fifth iterations are shown in
Figure~4(a)-(d), respectively. Figure~4(e) presents the central horizontal
cross sections of images in Figure~4(a), (b), and (d). The relative $
L^{2}(\Omega)$ error in the fifth iteration $\mathbf{u}^{(5)}$ was $5.4\%;$
for most practical purposes this would be more than acceptable.

\section{Conclusions}

We presented a novel dissipative time reversal approach for solving the
inverse source problem of TAT/PAT posed within a cavity with perfectly
reflecting walls. Unlike the previous work of \cite
{Holman-Kunyansky,Stefanov-Yang} where Dirichlet boundary condition was
used, we utilize the non-standard boundary condition (\ref{E:Rev-p}). The
latter leads to the dissipative boundary condition (\ref{E:err}) imposed on
the error $U(x,t)$, and, hence, to a natural decay of $U(x,0)$ with the
growth of $T$. Our approach results in two reconstruction methods: i) a
non-iterative approximation, converging exponentially to $f$ as $T \to \infty
$, and ii) a Neumann series formula. These two algorithms are applicable for
both full and partial data problems.

Compared to \cite{Holman-Kunyansky}, where rather stringent conditions on
the eigenvalues of the Neumann and Dirichlet Laplacians on $\Omega$ are
required for convergence, our approach is based on the much less restrictive
GCC (Condition~\textbf{\ref{A:Gcc}}). Moreover, unlike the method of~\cite
{Stefanov-Yang}, our technique does not require computing the harmonic
extension of the boundary values, which significantly simplifies its
numerical realization. It should be noted that the requirement $T \geq T(\Omega,\Gamma)$ is sharp for the convergence in Theorem~\ref{T:Neumann-ser} (dealing with the general Problem~\ref{P:TATg}). However, it is not sharp for the convergence in Theorem~\ref{T:TAT} (dealing with the inverse problem of TAT/PAT). Specifically, for the case of the full data, it is twice of the sharp time for the convergence of the TAT/PAT's inverse problem, obtained in~\cite{Stefanov-Yang}.  Nevertheless, our numerical simulations show that the present algorithm performs very well with such sharp measurement times.

While we only considered the simplest wave equation in the Euclidean spaces,
our analysis can be extended to problems formulated on Riemannian manifolds
(as in \cite{Stefanov-Yang}) and/or to problems with a potential (as in \cite
{Acosta}).

\section*{Acknowledgment}

The first author is grateful to C. Bardos, G. Nakamura, and P. Stefanov for
helpful discussions and comments. The second author thanks L. Friedlander
for a helpful discussion. The first and second authors were partially
supported by the NSF/DMS awards \# 1212125 and 1211521, respectively.

\end{document}